\newtheorem*{theorem*}{Theorem}
\newtheorem{theorem}{Theorem}[section]
\newtheorem{lemma}[theorem]{Lemma}
\newtheorem*{proposition*}{Proposition}
\newtheorem*{corollary*}{Corollary}
\newtheorem{corollary}[theorem]{Corollary}
\newtheorem{definition}[theorem]{Definition}
\newcommand{\ignore}[1]{}
\newcommand{\enote}[1]{} 
\newcommand{\knote}[1]{}
\newcommand{\rnote}[1]{}
\renewcommand{\P}[1]{{\mathbb{P}}\left[{#1}\right]}
\newcommand{\E}[1]{{\mathbb{E}}\left[{#1}\right]}
\newcommand{\N}{\mathbb N} \newcommand{\R}{\mathbb R}
\newcommand{\Z}{\mathbb Z} 
\newcommand{\C}{\mathbb C}
\newcommand{\Q}{\mathbb Q}
\newcommand{\half}{{\textstyle \frac12}}
\newcommand{\rw}{\gamma}
\begin{document}

\title{Scenery Reconstruction on Finite Abelian Groups}

\author{Hilary Finucane\footnote{Supported by an ERC grant.}, Omer
  Tamuz\footnote{Supported by ISF grant 1300/08. Omer Tamuz is a
    recipient of the Google Europe Fellowship in Social Computing, and
    this research is supported in part by this Google Fellowship.} and
  Yariv Yaari\footnote{Weizmann Institute, Rehovot 76100, Israel}}

\maketitle

 \begin{abstract}

   We consider the question of when a random walk on a finite abelian
   group with a given step distribution can be used to reconstruct a
   binary labeling of the elements of the group, up to a
   shift. Matzinger and Lember (2006) give a sufficient condition for
   reconstructibility on cycles. While, as we show, this condition is
   not in general necessary, our main result is that it is necessary
   when the length of the cycle is prime and larger than 5, and the
   step distribution has only rational probabilities. We extend this
   result to other abelian groups.



  
\end{abstract}

\section{Introduction}
Benjamini and Kesten~\cite{BenjKesten:96} consider the following
model: Let $G=(V,E)$ be a graph and let $f_1,f_2:V \to \{0,1\}$ be
binary labelings of the vertices, or ``sceneries". Let $v(t)$, for $t
\in \N$, be the position of a particle performing a random walk on
$G$. Given an observation of one of the sequences
$\{f_1(v(t))\}$ or $\{f_2(v(t))\}$, is it
possible to decide which of the two sequences was observed? This is
the problem of distinguishing sceneries. Benjamini and Kesten give
some conditions under which one can distinguish correctly with
probability one, and show that in some cases this cannot be done. Lindenstrauss~\cite{Lindenstrauss:1999} showed that when $G=\Z$ there
exist uncountably many functions $f$ that cannot be distinguished
given $\{f(v(t))\}$. 

The problem of scenery reconstruction is that of learning a completely
unknown scenery $f:V \to \{0,1\}$ by observing $\{f(v(t))\}$.
L\"owe, Matzinger and Merkel~\cite{lowe2004reconstructing} showed that when $G=\Z$ and the
values of $f$ are chosen i.i.d.\ uniformly (from a large enough set),
then almost all functions $f$ can be reconstructed. Furthermore,
Matzinger and Rolles~\cite{Matzinger:2003} showed that $f$ can be
reconstructed in the interval $[-n,n]$ with high probability from a
polynomial sample. Further related work has been pursued by
Howard~\cite{Howard:96, howard1996orthogonality,howard1997distinguishing}, and a good overview is given by Kesten~\cite{kesten1998distinguishing}.

We focus on the case when the graph $G$ is an undirected cycle of size $n$. One may think of this graph as having $\Z_n = \Z/n\Z$ as its vertex set, with $(k,\ell) \in E$ whenever $k-\ell \in \{-1,1\}$; equivalently, $G$ is the Cayley graph of $\Z_n$ with generating set $\{-1,1\}$.  We characterize a random walk on this graph by a step distribution $\rw$ on $\Z_n$ such that at each turn with probability $\rw(k)$ the particle jumps $k$ steps: $\rw(k) = \P{v(t+1)-v(t)=k}$. Indeed, this is simply a random walk on the group $\Z_n$. We choose $v(1)$ uniformly from $\Z_n$.


Let the functions $f_1:\Z_n \to \{0,1\}$ and $f_2:\Z_n \to \{0,1\}$ be
two sceneries. Fixing the random walk $\rw$, we say that $f_1$ and
$f_2$ belong to the same equivalence class if the distribution of
$\{f_1(v(t))\}$ is equal to $\{f_2(v(t))\}$. Note that for the finite
case, unlike the infinite case, these two distributions are different
if and only if they are mutually singular, if and only if $f_1$ and
$f_2$ can be distinguished with probability one from a single
realization of $\{f(v(t))\}$. Thus, our equivalence relation can also
be thought of as indistinguishability.

A first observation is that if $f_1$ and $f_2$ differ only by a cyclic
shift, i.e.\ there exists an $\ell$ such that for all $k$ it holds that
$f_1(k)=f_2(k+\ell)$ (again using addition in $\Z_n$), then $f_1$ and
$f_2$ are in the same equivalence class. Hence the equivalence classes
of functions that cannot be distinguished contain, at the least, all
the cyclic shifts of their members.

The trivial random walk that jumps a single step to the right
w.p.\ $1$ clearly induces minimal equivalence classes; that is,
classes of functions related by cyclic shifts and nothing
more. Another trivial random walk - the one that jumps to any $k \in
\Z_n$ with uniform probability - induces equivalence classes that
contain all the functions with a given number of ones.

For some random walks the classes of indistinguishable functions can
be surprising. Consider the following example, illustrated in
Fig.~\ref{fig:hilary-walk} below. The random walk is on the cycle of
length $n$, for $n$ divisible by $6$ ($n=12$ in the figure). Its step
function is uniform over $\{-2,-1,1,2\}$, so that it jumps either two
steps to the left, one to the left, one to the right or two to the
right, all with probability $1/4$.

Let $f_1(k)$ be $0$ for even $k$ and $1$ otherwise. Let $f_2(k)$ be
$0$ for $k \mod 6 \in \{0,1,2\}$ and $1$ otherwise. It is easy to
see that these two functions are indistinguishable, since
the sequence of observed labels will be a sequence of uniform i.i.d.\
bits in both cases.

\begin{figure}[h]
  \subfloat[$f_1$]
  {
    \label{fig:f_1}
    \includegraphics[width=0.3\textwidth]{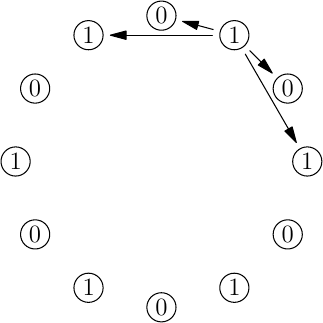}
  }
  \hspace{80pt}
  \subfloat[$f_2$]
  {
    \label{fig:f_2}
    \includegraphics[width=0.3\textwidth]{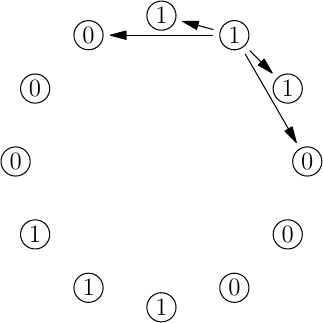}
  }
  \centering
  \caption{\label{fig:hilary-walk} The random walk depicted here has a
    step distribution that is uniform over $\{-2,-1,1,2\}$. It cannot be
    used to distinguish the two very different sceneries $f_1$ and
    $f_2$ above.}
\end{figure}


The question that we tackle is the following: which random walks induce minimal equivalence classes? In other words, for which random walks can any two sceneries that differ by more than a shift be distinguished? In the finite case, when any two sceneries differing by more than a shift can be distinguished, it is also possible to reconstruct any scenery up to a shift. So we call a random walk that induces minimal equivalence classes {\bf reconstructive}:
\begin{definition}
  Let $\rw:H \to \R$ be the step distribution of a random walk $v(t)$
  on a finite group $H$, so that $v(1)$ is picked uniformly from $H$
  and $\rw(k) = \P{v(t+1)-v(t) = k}$. Then $v(t)$ is {\bf
    reconstructive} if the distributions of
  $\{f_1(v(t))\}_{t=1}^\infty$ and $\{f_2(v(t))\}_{t=1}^\infty$ are
  identical only if $f_1$ is a shift of $f_2$.
\end{definition}

We are interested in exploring the conditions under which $v(t)$ is
reconstructive. Howard~\cite{Howard:96} answers this question for
$\rw$ with support on $\{-1,0,1\}$. For symmetric walks in which
$\rw(-1) = \rw(1) \neq 0$ he shows that $f$ can be reconstructed up to
a shift and a mirror image flip (that is, $f_1$ and $f_2$ cannot be
distinguished when $f_1(k)=f_2(-k)$). In all other cases (except the
trivial $\rw(0)=1$) he shows that the equivalence classes are minimal.

Matzinger and Lember~\cite{matzinger2006reconstruction} introduce the
use of the Fourier transform to the study of this question.  They
prove the following theorem\footnote{Matzinger and Lember's notion of
  a reconstructive random walk is slightly different, in that they
  only require reconstruction up to a shift {\em and flip}, where the
  {\em flip} of $f(k)$ is $f(-k)$. Thus Theorem~\ref{thm:distinct-f}
  differs in this point from theirs. They also require reconstruction
  from a single sequence of observations, which, as we point out
  above, is equivalent to our notion in the case of finite groups.} :
\begin{theorem}[Matzinger and
  Lember~\cite{matzinger2006reconstruction}, Theorem 3.2]
  \label{thm:distinct-f}
  Let $\rw$ be the step distribution of a random walk $v(t)$ on $\Z_n$. Let
  $\hat{\rw}$ be the Fourier Transform of $\rw$. Then $v(t)$ is
  reconstructive if the Fourier coefficients $\{\hat{\rw}(x)\}_{x \in
    \Z_n}$ are distinct.
\end{theorem}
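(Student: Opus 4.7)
The plan is to extract successive orders of correlation from the stationary observation process $Y_t := f(v(t))$, Fourier-expand each, and use distinctness of $\{\hat{\rw}(x)\}_{x\in\Z_n}$ as a Vandermonde node set to peel off individual Fourier modes of $f$; the resulting bispectrum is then shown to determine $f$ up to a cyclic shift.

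The first observation is that since $v(1)$ is uniform on $\Z_n$, the process $\{Y_t\}_{t=1}^\infty$ is stationary, so its distribution determines every correlation $\E{Y_{t_1}\cdots Y_{t_k}}$, and these depend only on time-differences. Expanding $\rw^{*s}$ and $f$ in the Fourier basis, a direct calculation yields the pair correlations
\begin{equation*}
c_s := \E{Y_0 Y_s} = \frac{1}{n^2}\sum_{x\in\Z_n}\hat{\rw}(x)^s\,|\hat f(x)|^2 .
\end{equation*}
Viewed as a linear system in $s=0,1,\ldots,n-1$ with Vandermonde matrix $(\hat{\rw}(x)^s)_{s,x}$, distinctness of the $\hat{\rw}(x)$ makes it invertible, so the data $(c_0,\ldots,c_{n-1})$ recovers $\{|\hat f(x)|^2\}_x$. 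This alone is not enough, since many $f$'s share a power spectrum.

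The same device applied to the triple correlations produces
\begin{equation*}
c(s,t) := \E{Y_0 Y_s Y_{s+t}} = \frac{1}{n^3}\sum_{u,w\in\Z_n}\hat{\rw}(u)^s\,\hat{\rw}(-w)^t\,\hat f(u)\hat f(w)\overline{\hat f(u+w)} ,
\end{equation*}
and a two-dimensional Vandermonde inversion, valid because both $u\mapsto\hat{\rw}(u)$ and $w\mapsto\hat{\rw}(-w)$ are injective, extracts every bispectrum coefficient $N(u,w):=\hat f(u)\hat f(w)\overline{\hat f(u+w)}$ from the $c(s,t)$. The bispectrum is manifestly shift-invariant: translating $f$ by $\ell$ multiplies $\hat f(x)$ by $e^{-2\pi i x\ell/n}$ and the phases cancel in $N$.

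The core of the proof is then to show that $f$ is pinned down by the magnitudes $|\hat f|$ and the bispectrum up to a cyclic shift. For any two candidates $f_1,f_2$ sharing this data, on the support $S:=\{x:\hat f_1(x)\neq 0\}$ the phase difference $\phi(x):=\arg\hat f_2(x)-\arg\hat f_1(x)$ satisfies the cocycle identity $\phi(u)+\phi(w)\equiv\phi(u+w)\pmod{2\pi}$ whenever $u,w,u+w\in S$. When $S=\Z_n$, this forces $\phi$ to be a homomorphism $\Z_n\to\R/2\pi\Z$, hence $\phi(x)=2\pi\ell x/n$, which is exactly the phase modification induced by translating $f_1$ by $\ell$. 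I expect the main obstacle to be the degenerate case where $\hat f$ has zeros, since then the cocycle identity constrains $\phi$ only on a sub-collection of triples; this will be handled either by appealing to higher-order polyspectra (extracted from higher-order correlations by the same Vandermonde method) or by exploiting that $\{0,1\}$-valuedness of $f$ forces the zero set of $\hat f$ to correspond to the annihilator of a subgroup $H\le\Z_n$ along whose cosets $f$ is constant, reducing the problem to the full-support case on $\Z_n/H$.
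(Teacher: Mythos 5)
Your first two steps (Vandermonde inversion of the pair correlations to get $|\hat f|^2$, then of the triple correlations to get the bispectrum) are sound and match the paper's opening moves, but from there you take a genuinely different route: the paper does not stop at third-order statistics. It defines the order-$(n-1)$ \emph{multispectrum} $A_f(\ell_1,\ldots,\ell_{n-1})=\sum_k f(k)f(k+\ell_1)\cdots f(k+\ell_1+\cdots+\ell_{n-1})$, recovers it from the corresponding temporal moments by a tensor-power Vandermonde inversion, and proves it determines $f$ up to shift by a purely combinatorial argument (evaluate $A_g$ at the tuple of jumps that traces out the support of each candidate scenery; positivity of that entry certifies a shift). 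Your phase-cocycle argument is more economical when it applies, and in the full-support case it is correct: $\phi(u)+\phi(w)\equiv\phi(u+w)$ forces $\phi$ to be a character, i.e.\ a shift. In particular, for $n$ prime your proof closes completely, since a non-constant binary $f$ on $\Z_p$ has $\hat f$ nowhere zero (a vanishing $\hat f(x)$, $x\neq 0$, would make the cyclotomic polynomial $\sum_{i=0}^{p-1}t^i$ divide a $\{0,1\}$-coefficient polynomial of degree $<p$ that is not identically $0$ or $\sum_i t^i$).

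The gap is exactly where you suspect it, and neither of your proposed repairs closes it for general $n$. The structural claim that the zero set of $\hat f$ for binary $f$ comes from a subgroup along whose cosets $f$ is constant is false: take $f=1_{\{0,1\}}$ on $\Z_6$, so $\hat f(x)=1+\omega_6^{x}$ vanishes only at $x=3$; $f$ is constant on cosets of no nontrivial subgroup, and the support $\{0,1,2,4,5\}$ of $\hat f$ is not the annihilator of anything, so there is no quotient $\Z_n/H$ to reduce to. With zeros present, the cocycle identity constrains $\phi$ only on triples $(u,w,u+w)$ lying entirely in the support $S$ of $\hat f$, and you have not shown that every such partial cocycle extends to a character of $\Z_n$ (in some examples, such as the one above, it happens to; in general this requires an argument you have not supplied, and it is known that the bispectrum can fail to determine a function up to translation when $\hat f$ vanishes somewhere). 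Your fallback --- ``appeal to higher-order polyspectra'' --- is therefore not an optional remark but the actual content of the theorem for composite $n$, and carrying it out (which order suffices, and why that order pins down $f$ up to shift) is precisely what the paper's multispectrum lemma does. As written, your proof establishes the theorem only under the additional hypothesis that $\hat f$ is nowhere zero, e.g.\ for prime $n$.
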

We provide the proof for this theorem in the appendix for the reader's
convenience, extending it (straightforwardly) to random walks on any
abelian group\footnote{Matzinger and Lember state it
  for ``periodic sceneries on $\Z$'', which are equivalent to
  sceneries on cycles.}. We henceforth use $\hat{\rw}$ to denote the
Fourier transform of $\rw$.

This condition is not necessary, as we show in
Theorem~\ref{thm:counterexample}. Our main result is that this
condition {\em is} necessary for random walks on $\Z_n$ when $n$ is
prime and larger than 5, and $\rw$ is rational.
\begin{theorem}
  \label{thm:prime-distinct}
  Let $\rw$ be the step distribution of a random walk $v(t)$ on $\Z_n$, for
  $n$ prime and larger than five, and let $\rw(k)$ be rational for all
  $k$.  Then $v(t)$ is reconstructive only if the Fourier
  coefficients $\{\hat{\rw}(x)\}_{x \in \Z_n}$ are distinct.
\end{theorem}

When a step distribution $\rw$ is rational (i.e., $\rw(k)$ is rational
for all $k$) then it can be viewed as a uniform distribution over a
finite multiset $\Gamma$. Here a finite multiset is a finite
collection of elements with repetitions.

Using this representation we make the following definition:
\begin{definition}
  \label{def:drift}
  The {\em drift} of a random walk on $\Z_n$ with step distribution
  uniform over the multiset $\Gamma$ is $D(\Gamma) = \sum_{k \in
    \Gamma} k$ (with addition in $\Z_n$).
\end{definition}
We show that a walk with non-zero drift is reconstructive on
prime-length cycles.
\begin{theorem}
  \label{cor:drift}
  Suppose $n$ is prime and greater than 5, and suppose
  $v(t)$ is a random walk over $\Z_n$, with step distribution uniform
  over the multiset $\Gamma$. Then if $D(\Gamma) \neq 0$ then $v(t)$
  is reconstructive.
\end{theorem}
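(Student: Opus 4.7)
My plan is to invoke Theorem~\ref{thm:distinct-f}, which reduces reconstructibility to proving that the Fourier coefficients $\{\hat\rw(x)\}_{x \in \Z_n}$ are all distinct. I will argue by contrapositive: assuming two Fourier coefficients coincide, I deduce $D(\Gamma) \equiv 0 \pmod n$, contradicting the hypothesis.

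The easy case to dispose of is $\hat\rw(x) = \hat\rw(0) = 1$ for some $x \neq 0$. Setting $\omega = e^{2\pi i/n}$, equality in the triangle inequality applied to $\sum_k \rw(k)\omega^{kx} = 1$ forces $\omega^{kx} = 1$ for every $k$ in the support of $\rw$; since $n$ is prime and $x \neq 0$, this collapses the support to $\{0\}$, so $\Gamma$ consists entirely of zeros and $D(\Gamma) = 0$.

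The substantive case is when $\hat\rw(x) = \hat\rw(y)$ for distinct nonzero $x, y \in \Z_n$. Here I plan to use the Galois action of $G := (\Z/n\Z)^*$ on the cyclotomic field $\Q[\omega]$: since $\rw$ is rational, the automorphism $\sigma_a$ sending $\omega$ to $\omega^a$ satisfies $\sigma_a(\hat\rw(b)) = \hat\rw(ab)$. Let $H \leq G$ be the stabilizer of $\hat\rw(1)$. The hypothesis yields $\sigma_{xy^{-1}}(\hat\rw(1)) = \hat\rw(1)$ with $xy^{-1} \neq 1$, so $H$ is nontrivial. Since $G$ is abelian, $H$ fixes $\hat\rw(a)$ for every $a$, so $\hat\rw$ is constant on each $H$-orbit of $\Z_n \setminus \{0\}$. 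A direct change-of-variables in the inverse Fourier formula (replacing $x$ by $h^{-1}x$ and using $H$-invariance of $\hat\rw$) transfers this symmetry back to $\rw$: the multiplicity function $c_j := |\Gamma|\cdot\rw(j)$ is also constant on each $H$-orbit of $\Z_n \setminus \{0\}$.

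Writing these orbits as $\mathcal{O}_i = a_i H$ with common multiplicity $c_{a_i}$, and grouping the drift sum by orbit, gives
\[
D(\Gamma) \;=\; \sum_{j \neq 0} j\, c_j \;\equiv\; \Bigl(\sum_{h \in H} h\Bigr)\Bigl(\sum_i c_{a_i}\, a_i\Bigr) \pmod n.
\]
The crux of the argument, and the key step I expect to highlight, is the identity $\sum_{h \in H} h \equiv 0 \pmod n$ for every nontrivial subgroup $H$ of $(\Z/n\Z)^*$ with $n$ prime. This follows by picking any generator $\xi \neq 1$ of $H$: the telescoping identity $(\xi - 1)\sum_{h \in H} h = \xi^{|H|} - 1$ vanishes modulo $n$ since $\xi^{|H|} = 1$ in $(\Z/n\Z)^*$, and $\xi - 1$ is invertible mod $n$ because $\xi \not\equiv 1$ and $n$ is prime. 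Hence $D(\Gamma) \equiv 0 \pmod n$, contradicting $D(\Gamma) \neq 0$, and the theorem follows.
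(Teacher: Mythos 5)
Your proof is correct, and it reaches the same high-level conclusion as the paper's --- non-distinct Fourier coefficients force a multiplicative symmetry of $\Gamma$ that annihilates the drift --- but by a genuinely different route in two places. First, where the paper simply cites Lemma~\ref{lemma:xS=yS} (proved via divisibility by the cyclotomic polynomial $Q_n$) to get $\rw(x^{-1}yk)=\rw(k)$, you rederive this invariance from the Galois action of $(\Z/n\Z)^*$ on $\Q(\omega_n)$; this is a clean, more conceptual substitute that in fact yields the slightly stronger statement that $\rw$ is invariant under the entire stabilizer subgroup $H$, not just under the single element $x^{-1}y$. Second, the final step differs: the paper observes directly that $D(\Gamma)=\sum_{k\in\Gamma}k=\sum_{k\in\Gamma}x^{-1}yk=x^{-1}yD(\Gamma)$ and divides by $1-x^{-1}y\neq 0$, whereas you decompose the drift sum over $H$-orbits and invoke $\sum_{h\in H}h\equiv 0$; your telescoping proof of that identity is the same one-line trick (multiply by $\xi-1$ and use invertibility mod a prime), so this part is morally identical but packaged less economically. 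Your handling of the degenerate case $\hat{\rw}(x)=\hat{\rw}(0)$ via equality in the triangle inequality is actually tidier than the paper's polynomial-identity argument and, unlike it, does not use rationality of $\rw$. The only cost of your approach is length; the only thing the paper's approach buys is brevity, since Lemma~\ref{lemma:xS=yS} is already available from the proof of Theorem~\ref{thm:prime-distinct}.
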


We show that any fixed, bounded rational random walk on a large enough
prime cycle is either symmetric or reconstructive. Symmetric random
walks are those for which $\rw(k)=\rw(-k)$ for all $k$. They are not
reconstructive because they cannot distinguish between $f(k)$ and
$f(-k)$.

Given a step distribution $\rw : \mathbb{Z} \rightarrow [0,1]$ on
$\Z$, let $\rw_n$ denote the step distribution on $\mathbb{Z}_n$
induced by $\rw$ via
$$\rw_n(k)= \sum_{a = k\, mod\, n} \rw(a).$$
\begin{theorem}\label{cor:bounded}
  Let $\rw : \Z \rightarrow [0,1]$ be a distribution over $\Z$ with
  bounded support, and assume $\rw(a) \in \mathbb{Q}$ for all
  $a$. Then either $\rw$ is symmetric, or there exists an $N$ such
  that for all prime $n > N$, $\rw_n$ is reconstructive.
\end{theorem}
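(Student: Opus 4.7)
By Theorem~\ref{thm:prime-distinct} combined with Theorem~\ref{thm:distinct-f}, for primes $n>5$ the walk $\rw_n$ is reconstructive if and only if its Fourier transform $\hat{\rw}_n$ is injective on $\Z_n$. So it suffices to show that, assuming $\rw$ is not symmetric, injectivity holds for every sufficiently large prime. Set $Q(z) := \sum_{a\in\Z}\rw(a)\,z^a$, a Laurent polynomial with rational coefficients and finite support; then $\hat{\rw}_n(x)=Q(e^{2\pi ix/n})$. A coincidence of two Fourier coefficients at $x\ne y$ corresponds to a pair of distinct $n$-th roots of unity lying on the algebraic curve
\[
C \;=\; \{(z,w)\in(\C^*)^2 : Q(z)=Q(w)\}
\]
off the diagonal $\Delta=\{(z,z)\}$. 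Writing $\mu_n$ for the group of $n$-th roots of unity, the task is to show $(C\setminus\Delta)\cap(\mu_n\times\mu_n)=\emptyset$ for every sufficiently large prime~$n$.

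The main tool is the Manin--Mumford theorem for the algebraic torus $(\C^*)^2$ (Laurent's theorem): the torsion points of $(\C^*)^2$ lying on $C$ form a finite union of torsion translates of algebraic subgroups of $(\C^*)^2$ contained in $C$. Isolated torsion points off $\Delta$ therefore constitute a finite set of bounded order, so they are avoided once $n$ is a prime exceeding that bound. A positive-dimensional torsion coset in $C$ has the form $\{(\zeta_1 t^a,\zeta_2 t^b):t\in\C^*\}$ with $\gcd(a,b)=1$ and $(\zeta_1,\zeta_2)$ torsion, and its containment in $C$ amounts to the Laurent-polynomial identity $Q(\zeta_1 t^a)\equiv Q(\zeta_2 t^b)$ in $t$. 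Comparing the ranges of exponents of $t$ on the two sides forces $|a|=|b|$, and coprimality then gives $|a|=|b|=1$. Two inequivalent cases remain: $a=b=1$ produces ``shifted diagonals'' $\{(\zeta_1 t,\zeta_2 t)\}$, contained in $C$ precisely when $\zeta_1/\zeta_2$ has order dividing $g:=\gcd(\operatorname{supp}\rw)$; and the ``flip'' case $a=-b=1$ gives $\{(\zeta_1 t,\zeta_2 t^{-1})\}$, whose membership in $C$ means $\rw(k)\zeta_1^k=\rw(-k)\zeta_2^{-k}$ for every $k$. Taking absolute values in the latter identity, and using $\rw(k)\ge 0$, forces $\rw(k)=\rw(-k)$, i.e., $\rw$ symmetric---contradicting the hypothesis.

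Under the non-symmetry assumption the flip case is ruled out, leaving only the shifted diagonals as positive-dimensional torsion cosets in $C$. A pair $(\zeta_1 t,\zeta_2 t)$ in $\mu_n\times\mu_n$ satisfies $(\zeta_1/\zeta_2)^n=1$; since $\zeta_1/\zeta_2$ has order dividing $g$, for a prime $n>g$ this order must equal $1$, so the pair lies on $\Delta$. Combining with the finite exceptional set of isolated torsion points yields $(C\setminus\Delta)\cap(\mu_n\times\mu_n)=\emptyset$ for all sufficiently large primes, completing the proof. The main obstacle is the case analysis of positive-dimensional torsion cosets---in particular the flip case, which is the one point at which the non-symmetry hypothesis enters essentially, via the non-negativity of the $\rw(k)$; this is the only step that uses that $\rw$ is a probability distribution, rather than a general Laurent polynomial with rational coefficients.
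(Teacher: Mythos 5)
Your proposal is correct in its essentials, but it takes a genuinely different route from the paper's. The paper argues elementarily: writing $\rw$ as the uniform distribution on a multiset $\Gamma\subset\Z$, it uses Lemma~\ref{lemma:xS=yS} to convert a coincidence $\hat{\rw}_n(x)=\hat{\rw}_n(y)$ with $x,y\neq 0$ into the multiset identity $v\Gamma_n=\Gamma_n$ with $v=x^{-1}y$, and then shows by a short interval argument (after normalizing by the g.c.d.\ of $\Gamma$ and taking integer combinations so that $1$ lies in the multiset) that for $n$ beyond an explicit bound of order $b^2$ any such $v$ must lie in $\{-1,0,1\}$, whence the walk is symmetric or reconstructive; this yields an effective, explicit $N$. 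You instead encode the coincidence as a torsion point of $(\C^*)^2$ on the curve $Q(z)=Q(w)$ off the diagonal and invoke Laurent's theorem, then classify the positive-dimensional torsion cosets. This is much heavier machinery and, as stated, non-effective (no explicit $N$), but it is conceptually clean, isolates exactly where non-symmetry enters (the flip cosets $(\zeta_1 t,\zeta_2 t^{-1})$, killed by the positivity of $\rw$ upon taking absolute values), and notably never uses rationality of $\rw$: you only need the sufficiency direction, Theorem~\ref{thm:distinct-f}, not Theorem~\ref{thm:prime-distinct}, so your argument in fact proves the dichotomy for arbitrary real step distributions with bounded support. One small point to patch: the claim that comparing exponent ranges forces $|a|=|b|$ needs the diameter of $\operatorname{supp}\rw$ to be positive, so the degenerate cases ($a=0$ or $b=0$, or $|\operatorname{supp}\rw|=1$) should be treated separately --- each forces $\rw$ to be a point mass or constant, hence symmetric or reducible to the diagonal case --- but this is easily done.
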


Finally, we extend the result of Theorem~\ref{thm:prime-distinct} to
random walks on any abelian group of the form $Z_{p_1}^{d_1} \times \cdots \times
Z_{p_m}^{d_m}$ where $p_1, \ldots, p_m$ are primes larger than $5$.
\begin{theorem}\label{thm:general}
  Let $\rw$ be the step distribution of a random walk $v(t)$ on
  $\mathbb{Z}_{p_1}^{d_1} \times \cdots \times
  \mathbb{Z}_{p_m}^{d_m}$, such that $\rw(k_1, \ldots, k_m) \in \Q$
  for all $(k_1, \ldots , k_m)$, and suppose that $p_i>5$ is prime for
  all $i$. Then $v(t)$ is reconstructive only if the Fourier
  coefficients $\{\hat{\rw}(x)\}_{x \in \Z_n}$ are distinct.
\end{theorem}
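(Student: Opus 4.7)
I plan to prove the contrapositive: assume $\hat\rw(x_1) = \hat\rw(x_2)$ for two distinct characters $x_1, x_2 \in \hat H$, where $H = \mathbb{Z}_{p_1}^{d_1} \times \cdots \times \mathbb{Z}_{p_m}^{d_m}$, and construct a pair of functions $f_1, f_2 : H \to \{0,1\}$ that are not shifts of one another yet induce the same distribution on observed labels.

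The first step is a reduction to a smallest quotient. Let $K = \ker(x_1) \cap \ker(x_2)$ with quotient map $\pi : H \to H/K$. Both characters descend to distinct characters $\bar x_1, \bar x_2$ of $H/K$, the pushforward step distribution $\pi_*\rw$ remains rational, and $\hat{\pi_*\rw}(\bar x_i) = \hat\rw(x_i)$, so the Fourier coincidence is preserved. Any pair $(f'_1, f'_2)$ on $H/K$ that is not related by a shift but induces identical observation distributions under $\pi_*\rw$ lifts via $f_i := f'_i \circ \pi$ to such a pair on $H$: observations agree because $\pi(v(t))$ is itself the random walk on $H/K$ with step distribution $\pi_*\rw$, and the non-shift property is preserved because any shift on $H$ descends to a shift on $H/K$. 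By Pontryagin duality, $\widehat{H/K} \cong \langle x_1, x_2 \rangle$ has at most two generators, so $H/K$ is a product of factors from $\{\mathbb{Z}_{p_i}, \mathbb{Z}_{p_i}^2 : 1 \le i \le m\}$.

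When $H/K \cong \mathbb{Z}_{p_i}$ for some $i$, Theorem~\ref{thm:prime-distinct} applies directly---using that $p_i > 5$ and that $\pi_*\rw$ is rational---and furnishes the required pair. The main obstacle is the remaining cases, where $H/K$ is either cyclic of composite squarefree order $p_{i_1} p_{i_2} \cdots$ or contains a factor $\mathbb{Z}_{p_i}^2$. There, Theorem~\ref{thm:prime-distinct} does not apply, and Theorem~\ref{thm:counterexample} cautions that the distinctness condition is not in general necessary on composite cyclic groups. My plan in these cases is first to search $\widehat{H/K}$ for an auxiliary Fourier coincidence whose two characters span only a prime-order subgroup: any such coincidence permits a further quotient to land in the prime-cycle case. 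Failing that, the non-reconstructive pair must be constructed directly on $H/K$, by adapting the rational and cyclotomic Galois-theoretic tools underlying the proof of Theorem~\ref{thm:prime-distinct}, with the $p_i > 5$ hypothesis continuing to provide the necessary room in the relevant cyclotomic fields for an explicit $\{0,1\}$-valued construction.
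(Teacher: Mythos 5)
Your reduction to the quotient $H/K$ with $K=\ker(x_1)\cap\ker(x_2)$ is sound as far as it goes: the pushforward walk, the lifting of a non-shift-equivalent pair, and the observation that $H/K$ is a $2$-generated group (hence a product of factors $\mathbb{Z}_{p_i}$ and $\mathbb{Z}_{p_i}^2$) are all correct. But the proposal then resolves only the case $H/K\cong\mathbb{Z}_{p_i}$, which is exactly the case already covered by Theorem~\ref{thm:prime-distinct}, and defers everything else to ``search for an auxiliary coincidence'' (which you give no reason to believe exists, and which in general does not) or to ``adapting the cyclotomic tools.'' Those remaining cases \emph{are} the content of the theorem, and each requires a specific new ingredient that the proposal does not supply. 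First, when the two characters involve a repeated prime (the $\mathbb{Z}_{p_i}^2$ factor), the pair $x,y$ can be linearly independent over $\mathbb{Z}_{p_i}$, and one needs a construction that is not a projection to a single cycle: the paper takes $g=\mathbf{1}_{\{0\}}$, so that $f_x,f_y$ are indicators of the orthogonal hyperplanes $U_x,U_y$, and shows a shift relation would force $U_x=U_y$, contradicting linear independence (Theorem~\ref{thm:regular_torus}, case 2). Nothing in your outline produces this.

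Second, and more seriously, when $H/K$ is a product of cycles of \emph{distinct} primes, the coupling itself is not yet established. For prime $n$ the identity $\hat\rw(x)=\hat\rw(y)$ forces coefficient-wise equality because the minimal polynomial of $\omega_n$ over $\Q$ is $1+t+\cdots+t^{n-1}$; for $n=p_1p_2$ the cyclotomic polynomial $\Phi_{p_1p_2}$ has degree $(p_1-1)(p_2-1)$ and a rational polynomial of degree $<n$ vanishing at $\omega_n$ need not have equal coefficients, so Lemma~\ref{lemma:xS=yS} does not transfer. The paper's resolution is Lemma~\ref{lemma:min_poly} (the minimal polynomial of $\omega_{p_j}$ over $\Q(\omega_{p_1},\ldots,\omega_{p_{j-1}})$ is still $\sum_{i=0}^{p_j-1}t^i$, by the degree count $[\Q(\zeta_{p_1\cdots p_j}):\Q]=\prod_i(p_i-1)$) combined with an induction that peels off one prime at a time (Lemma~\ref{lemma:general_coupling}), yielding that the joint law of $(k_1\cdot x_1,\ldots,k_m\cdot x_m)$ equals that of $(k_1\cdot y_1,\ldots,k_m\cdot y_m)$ and hence a coupling. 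This linear-disjointness argument is the heart of the proof and is absent from your plan, so as written the proposal has a genuine gap precisely where the theorem goes beyond Theorem~\ref{thm:prime-distinct}. (A minor point: Theorem~\ref{thm:counterexample} shows tightness of the \emph{rationality} hypothesis on $\mathbb{Z}_7$, not a failure on composite cycles; squarefree composite cycles with prime factors $>5$ are in fact covered by the theorem you are trying to prove.)
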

This result applies, in particular, to walks on $\Z_n$ where $n$ is
square-free (i.e., is not divisible by a square of a prime) and not
divisible by $2$, $3$ or $5$, since such groups are isomorphic to
groups of the form considered in this theorem.

We show that the rationality condition of our main theorem is tight by
presenting an example of a random walk with prime $n$, irrational
probabilities and non-distinct Fourier coefficients where
reconstruction is possible. It remains open, however, whether the
result holds for rational random walks on general cycles of composite
length.

This paper will proceed as follows. In
Section~\ref{sec:preliminaries}, we will review some useful algebraic
facts.  In Section~\ref{sec:mainproof}, we will prove
Theorem~\ref{thm:prime-distinct}. In Section~\ref{sec:corollaries} we
will prove Theorem~\ref{cor:drift}, on walks with non-zero drift, and
Theorem~\ref{cor:bounded}, on walks with bounded support. In
Sections~\ref{sec:tightness}, and \ref{sec:extensions} we show the
tightness of the main theorem, and extend it to products of
prime-length cycles, respectively. In Section~\ref{sec:openproblems},
we will present some open problems.

\section{Preliminaries}
\label{sec:preliminaries}

Before presenting the proofs of our theorems, we would like to refresh
the reader's memory of abelian (commutative) groups and their Fourier
transforms, as well as introduce our notation. For the cyclic group
$\Z_n$, the Fourier transform $\hat{\rw}: \Z_n \to \C$ of $\rw :\Z_n
\to \C$ is defined by
\begin{align*}
  \hat{\rw}(x) = \sum_{k \in \Z_n}\omega_n^{kx}\rw(k). 
\end{align*}
where $\omega_n = e^{-\frac{2\pi}{n}i}$.

Any finite abelian group can be written as the Cartesian product of
cycles with prime power lengths: if $H$ is abelian then there exist
$\{n_i\}$ such that $H$ is isomorphic to the {\em torus}
$\Z_{n_1}\times \cdots \times \Z_{n_m}$.  Thus an element $h \in H$
can be thought of as a vector so that $h_i$ is in $\Z_{n_i}$.

When all the $n_i$'s are the same then $H=\Z_n^d$ is a {\em regular
  torus}, and when $n$ is prime then it is a {\em prime regular
  torus}. Here a natural dot product exists for $h,k \in H$: $h \cdot
k=\sum_{i=1}^dh_ik_i$, where both the multiplication and summation are
over $\Z_n$. The Fourier transform for $\Z_n^d$ is thus
\begin{align*}
  \hat{\rw}(x) = \sum_{k \in \Z_n^d}\omega_n^{k \cdot x}\rw(k). 
\end{align*}

The representation of abelian groups as tori is not unique. The
canonical representation is $H=\Z_{n_1}^{d_1}\cdots\Z_{n_m}^{d_m}$,
where the $n_i$'s are distinct powers of primes. In this general case
an element $h \in H$ can be thought of as a vector of vectors, so that
$h_i$ is in $\Z_{n_i}^{d_i}$. The Fourier transform is
\begin{align*}
  \hat{\rw}(x) = \sum_{k \in H}\prod_{i=1}^m\omega_{n_i}^{k_i \cdot x_i}\rw(k). 
\end{align*}

The {\em $n$-th cyclotomic polynomial} is the minimal polynomial of
$\omega_n$; i.e. the lowest degree polynomial over $\Q$ that has
$\omega_n$ as a root and a leading coefficient of one. This polynomial
must divide all non-zero polynomials over $\Q$ that have $\omega_n$ as
a root. When $n$ is prime then this polynomial is $Q_n(t)=
\sum_{i=0}^{n-1} t^i$.

\section{Proof of Theorem~\ref{thm:prime-distinct}}
\label{sec:mainproof}

\subsection{Outline of main proof}
Theorem~\ref{thm:prime-distinct} states the distinctness of the Fourier
coefficients of the step distribution $\rw$ is a necessary (and by
Theorem~\ref{thm:distinct-f} sufficient) condition for reconstruction,
when $n$ is prime and greater than five, and the probabilities
$\rw(k)$ are rational. In Theorem~\ref{thm:counterexample} we give a
counterexample that shows that the rationality condition is tight,
giving an irrational random walk on $\Z_7$ with non-distinct Fourier
coefficient, which is reconstructive. In Section~\ref{sec:tightness}, we
show that the condition $n>5$ is also tight.

To prove Theorem~\ref{thm:prime-distinct} we shall construct,
for any random walk on a cycle of prime length larger than 5 with rational step distribution
$\rw$ such that $\hat{\rw}(x) = \hat{\rw}(y)$ for some $x \neq y$, two functions $f_1$ and $f_2$ that differ by more than a
shift. We shall consider two random walks: $v_1(t)$ on $f_1$ and
$v_2(t)$ on $f_2$, with the same step distribution $\rw$. If a
coupling exists such that the observed sequences $\{f_1(v_1(t))\}$ and
$\{f_2(v_2(t))\}$ are identical then $f_1$ and $f_2$ cannot be
distinguished, since the sequences will have the same marginal
distribution. We will show that such a coupling always exists.

\subsection{Motivating example}
  
Consider the following example. Let the step distribution $\rw$ uniformly
choose to move 1, 2, or 4 steps to the left on a cycle of length 7. A
simple calculation will show that $\hat{\rw}(1) = \hat{\rw}(2)$. In
what is not a coincidence (as we show below), the support of $\rw$ is
invariant under multiplication by 2 (over $Z_7$):  $2\{1,2,4\} =
\{1,2,4\}$. We will use this fact to construct two functions $f_1$ and
$f_2$ that are indistinguishable, even though they are not related by
a shift.

Let $f_1(k)$ equal 1 for $k \in \{0,1\}$ and $f_2(k)$ equal 1 for $k
\in \{0,2\}$, as shown in Fig.~\ref{fig:seven-walk}. (More generally, $f_1$ and $f_2$ can be any two functions such that $f_1(k) = f_2(2k)$ for all $k$.) Let $v_1(t)$ be a
random walk over $Z_7$ with step distribution $\rw$. We couple it to a
random walk $v_2(t)$ by setting $v_2(t) = 2 \cdot v_1(t)$, so that
$v_2$ always jumps twice as far as $v_1$. Its step distribution is
thus uniform over $2 \cdot \{1,2,4\}=\{1,2,4\}$, and indeed has the
same step distribution $\rw$. Finally, because $f_2(2k) = f_1(k)$, we
have $f_2(v_2(t)) = f_1(v_1(t))$ for all $t$.

We have thus constructed two random walks, both with step distribution
$\rw$. The first walks on $f_1$ and the second walks on $f_2$, and yet
their observations are identically distributed. Hence the two
functions are indistinguishable.

Note also that the sum of the elements in the support of $\rw$ is
zero: $1+2+4=0 \mod 7$. This is not a coincidence; as we show
in Theorem~\ref{cor:drift} this must be the case for every step
distribution that is not reconstructive.

\begin{figure}[h]
  \subfloat[$f_1$]
  {
    \label{fig:f_1}
    \includegraphics[width=0.3\textwidth]{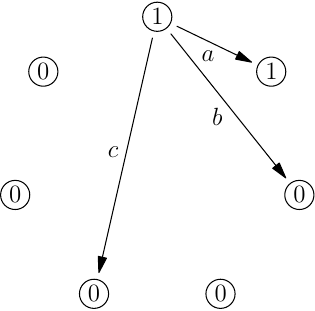}
  }
  \hspace{80pt}
  \subfloat[$f_2$]
  {
    \label{fig:f_2}
    \includegraphics[width=0.3\textwidth]{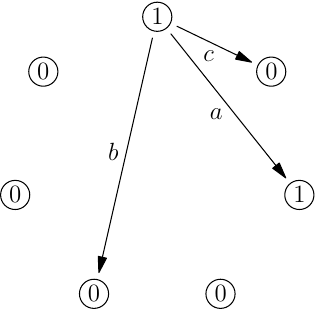}
  }
  \centering
  \caption{\label{fig:seven-walk} We couple the random walks on $f_1$
    and $f_2$ above by choosing, at each time period, either `a', `b'
    or `c' uniformly and having each walk take the step marked by that
    letter in its diagram. The result is that $f_1(v_1(t)) =
    f_2(v_2(t))$. }
\end{figure}

\subsection{Proof of Theorem~\ref{thm:prime-distinct}}
To prove Theorem~\ref{thm:prime-distinct} we will first prove two
lemmas. We assume here that $\rw(k)$ is rational for all $k$ and that
$n>5$ is prime.

\begin{lemma}\label{lemma:xS=yS} Suppose that $\hat{\gamma}(x) =
  \hat{\gamma}(y)$ for some $x, y \neq 0$. Then $\gamma(k
  x^{-1} y) = \gamma(k)$ for all $k$, where all operations are in the field $\mathbb{Z}_n$.
\end{lemma}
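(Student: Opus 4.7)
The plan is to start by expressing the hypothesis $\hat{\gamma}(x) = \hat{\gamma}(y)$ as an explicit identity in the roots of unity, then exploit the rationality of $\gamma$ together with the (irreducible) cyclotomic polynomial to force a rigid structure on $\gamma$. Concretely, because $n$ is prime and $x \neq 0$, the element $x$ is invertible in $\mathbb{Z}_n$, so I can substitute $k = j \cdot x^{-1} y$ in the sum defining $\hat{\gamma}(y)$ to rewrite it as $\sum_j \omega_n^{jx}\, \gamma(j y^{-1} x)$. Subtracting from $\hat{\gamma}(x) = \sum_j \omega_n^{jx}\, \gamma(j)$ gives the identity
\begin{equation*}
\sum_{j \in \mathbb{Z}_n} \omega_n^{jx}\, \bigl[\gamma(j) - \gamma(j y^{-1} x)\bigr] = 0.
\end{equation*}

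Next, since $x$ is a unit in $\mathbb{Z}_n$, the map $j \mapsto jx$ is a bijection, so after reindexing the above says that the polynomial $p(t) = \sum_{i=0}^{n-1} \delta(i) \, t^i \in \mathbb{Q}[t]$, where $\delta(i) := \gamma(i x^{-1}) - \gamma(i x^{-1} \cdot y^{-1} x)$, vanishes at $t = \omega_n$. Here is where rationality is essential: all coefficients $\delta(i)$ are rational, so $p$ is a rational polynomial of degree less than $n$ with $\omega_n$ as a root. By the preliminaries section, the minimal polynomial of $\omega_n$ over $\mathbb{Q}$ is the cyclotomic polynomial $Q_n(t) = 1 + t + \cdots + t^{n-1}$, which has degree exactly $n-1$. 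Thus $p$ must be a scalar multiple of $Q_n$, i.e.\ all the $\delta(i)$ are equal to a common constant $c$.

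To finish, I would sum the relation $\gamma(j) - \gamma(j y^{-1} x) = c$ over all $j \in \mathbb{Z}_n$. Since $j \mapsto j y^{-1} x$ is a bijection of $\mathbb{Z}_n$, the two sums of $\gamma$ on the left cancel, yielding $nc = 0$ and hence $c = 0$ (the characteristic being zero in $\mathbb{Q}$). Therefore $\gamma(j) = \gamma(j y^{-1} x)$ for every $j$, and substituting $j = k x^{-1} y$ gives $\gamma(k x^{-1} y) = \gamma(k)$, as desired.

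The only real obstacle is the rationality/cyclotomic step: if one allowed irrational values of $\gamma(k)$, then $p(\omega_n) = 0$ would not force $p$ to be a multiple of $Q_n$, and the argument would break — this is exactly why the rationality hypothesis is needed and why, as noted in the paper, a non-rational counterexample exists for $n = 7$. The rest of the manipulation is a bookkeeping exercise with the field structure of $\mathbb{Z}_n$ (using primality of $n$ to invert $x$ and $y$).
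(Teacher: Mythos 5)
Your proof is correct and follows essentially the same route as the paper: form the rational polynomial with coefficients $\gamma(ix^{-1})-\gamma(iy^{-1})$ vanishing at $\omega_n$, invoke the cyclotomic minimal polynomial for prime $n$ to conclude it is a constant multiple of $Q_n$, and kill the constant by summing the coefficients (which is exactly the paper's evaluation $P(1)=0\neq Q_n(1)=n$). The only blemish is a harmless typo in the substitution (it should be $k = j x y^{-1}$, not $k = j x^{-1} y$, to produce the exponent $\omega_n^{jx}$); the displayed identity that follows is correct.
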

In other words, the random walk has the same probability to add $k$ or
$kx^{-1} y$, for all $k$.

\begin{proof}

  Letting $\omega$ denote $\omega_n=e^{-\frac{2\pi}{n}i}$ and applying the definition
  of $\hat{\rw}$, we have
$$\sum_{k = 0}^{n-1}\gamma(k) \omega^{kx} = \sum_{k=0}^{n-1}\gamma(k)\omega^{ky}.$$
Since $\omega^n = 1$, $\omega$ is a root of the polynomial 
$$P(t) = \sum_{k = 0}^{n-1}\gamma(k) t^{kx\, (mod\, n)} - \sum_{k=0}^{n-1}\gamma(k)t^{ky\, (mod\, n)}.$$
or, by a change of variables
$$P(t) = \sum_{k = 0}^{n-1}\left(\gamma(kx^{-1}) -\gamma(k y^{-1}) \right)t^{k}$$
where the inverses are taken in the field $\mathbb{Z}_n$. (Recall $n$ is prime.)

Since $P$ has $\omega_n$ as a root then $Q_n$, the $n$-th cyclotomic
polynomial, divides $P$. However, $P$ has degree at most $n-1$, so
either $P$ is the zero polynomial or $P$ is equal to a constant times
$Q_n$.

The latter option is impossible, since $P(1) = 0$ and $Q_n(1) =
n$. Thus, $P = 0$, so $\gamma(k x^{-1}) = \gamma(k y^{-1})$ for all
$k$, or equivalently, $\gamma(k x^{-1} y) = \gamma(k)$ for all $k$.
\end{proof}

\begin{lemma}\label{lemma:prime-coupling}
  If $\hat{\gamma}(x) = \hat{\gamma}(y)$ for some $x, y
  \neq 0$, and if $f_1, f_2:\mathbb{Z}_n \rightarrow \{0,1\}$ are such
  that $f_1(k) = f_2(x^{-1}yk)$ for all $k$, then $\gamma$ cannot
  distinguish between $f_1$ and $f_2$.
\end{lemma}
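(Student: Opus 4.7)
The plan is to construct an explicit coupling between a random walk $v_1(t)$ on $f_1$ and a random walk $v_2(t)$ on $f_2$, both having step distribution $\gamma$, such that the observed sequences $\{f_1(v_1(t))\}$ and $\{f_2(v_2(t))\}$ coincide almost surely. Once this is done, the two observation processes have identical distributions, which by definition means $\gamma$ cannot distinguish $f_1$ from $f_2$.

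First, set $c = x^{-1}y$, which is a nonzero element of the field $\mathbb{Z}_n$. Let $v_1(t)$ be a random walk on $\mathbb{Z}_n$ with step distribution $\gamma$ and with $v_1(1)$ uniform on $\mathbb{Z}_n$. Define $v_2(t) := c \cdot v_1(t)$ (multiplication in $\mathbb{Z}_n$). I would then verify three things: (i) $v_2(1)$ is uniform on $\mathbb{Z}_n$, which holds because multiplication by the nonzero element $c$ is a bijection of $\mathbb{Z}_n$; (ii) $v_2(t)$ is a random walk with step distribution $\gamma$; and (iii) $f_1(v_1(t)) = f_2(v_2(t))$ for all $t$.

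For (iii), the hypothesis $f_1(k) = f_2(x^{-1}yk) = f_2(ck)$ gives it directly: $f_2(v_2(t)) = f_2(c \cdot v_1(t)) = f_1(v_1(t))$. For (ii), the step of $v_2$ is $v_2(t+1) - v_2(t) = c(v_1(t+1) - v_1(t))$, so the step distribution of $v_2$ assigns to $k \in \mathbb{Z}_n$ the probability $\gamma(c^{-1}k)$. This is exactly where Lemma~\ref{lemma:xS=yS} enters: that lemma, applied to the hypothesis $\hat\gamma(x) = \hat\gamma(y)$, yields $\gamma(k \cdot x^{-1} y) = \gamma(k)$ for all $k$, i.e.\ $\gamma(ck) = \gamma(k)$, equivalently $\gamma(c^{-1}k) = \gamma(k)$. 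Hence $v_2$ has step distribution $\gamma$, as required.

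Combining (i)--(iii), $v_2(t)$ is a legitimate random walk on $\mathbb{Z}_n$ with the same step distribution as $v_1(t)$, and under this coupling the two observation sequences are identical on every sample path. This is essentially a formalization of the motivating example on $\mathbb{Z}_7$ presented above, with the role of the multiplier $2$ played by $x^{-1}y$. There is no real obstacle here; the only subtle point is recognizing that the conclusion of Lemma~\ref{lemma:xS=yS} is precisely what is needed to ensure that scaling $v_1$ by $c$ preserves the step distribution, so the lemma should be invoked immediately after defining the coupling.
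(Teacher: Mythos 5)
Your proposal is correct and matches the paper's proof essentially verbatim: both define the coupling $v_2(t) = x^{-1}y\,v_1(t)$, invoke Lemma~\ref{lemma:xS=yS} to see that $v_2$ has step distribution $\gamma$, and conclude $f_1(v_1(t)) = f_2(v_2(t))$ from the hypothesis on $f_1, f_2$. Your additional explicit check that $v_2(1)$ is uniform (since multiplication by a nonzero element of $\Z_n$ is a bijection) is a small point the paper leaves implicit.
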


\begin{proof}
  Let $v_1(t)$ be a $\rw$-r.w.\ on $f_1$. Let $v_2(t)$ be a random
  walk defined by $v_2(t) = x^{-1}y v_1(t)$, so that whenever $v_1$
  jumps $k$ then $v_2$ jumps $x^{-1}yk$.

  By Lemma~\ref{lemma:xS=yS} we have that $\rw(x^{-1}yk) = \rw(k)$, so
  the step distribution of $v_2$ is also $\rw$. Furthermore, at time
  $t$, $v_1$ will see $f_1(v_1(t))$ and $v_2$ will see $f_2(v_2(t)) =
  f_2(x^{-1}yv_1(t))$. But for all $k$, $f_1(k) = f_2(x^{-1}y \cdot
  k)$, and so $f_1(v_1(t))=f_2(v_2(t))$ for all $t$.
\end{proof}

To conclude the proof of Theorem~\ref{thm:prime-distinct}, we must
find, for all prime $n > 5$ and all $x \neq y$, $x, y \neq 0$ functions $f_1$ and
$f_2$ such that $f_1$ is not a shift of $f_2$, and $f_1(k) =
f_2(x^{-1}yk)$. We use another argument for the special case $x=0$ or $y = 0$.

\begin{proof}[Proof of Theorem~\ref{thm:prime-distinct}]

  For $x, y \neq 0$, by Lemma~\ref{lemma:prime-coupling} we can choose
  any $f_1$ and $f_2$ such that $f_1(k) = f_2(x^{-1}yk)$ and $f_1$ is
  not a shift of $f_2$; for example, if $x^{-1}y \neq -1$, then
$$f_1(k) = \begin{cases}
  1 & k = 0,1\\
  0 & o.w.
\end{cases}$$
and
$$f_2(k) = \begin{cases}
  1 & k = 0, x^{-1}y\\
  0 & o.w.
\end{cases}$$
satisfy the requirements. For $n=7$ and $x^{-1}y=2$ these two
functions are depicted in Fig.~\ref{fig:seven-walk}.

If $x^{-1}y = -1$, then we can choose any function which is not a
reflection or shift of itself; for example,
$$f_1(k) = \begin{cases} 
1& k = 0, 1, 3 \\
0 & o.w.
\end{cases}$$
and $f_2(k)=f_1(-k)$.

This leaves us with the case $x=0$ or $y=0$; wlog, suppose $x = 0$. We
still have $P(t) \equiv 0$ by the same proof as above, which gives us
\begin{eqnarray*}
0 & \equiv& P(t)\\
& = & \sum_{k = 0}^{n-1}\gamma(k) t^{kx\, (mod\, n)} - \sum_{k=0}^{n-1}\gamma(k)t^{ky\, (mod\, n)}\\
&=& 1- \sum_{k=0}^{n-1}\gamma(k)t^{ky\, (mod\, n)}\\
&=& 1- \sum_{k=0}^{n-1}\gamma(ky^{-1})t^{k}\\
\end{eqnarray*}
where the last equality is possible because $y \neq x=0$. But then we
must have $\gamma(0) = 1$, so the random walk does not move after the
first vertex is chosen. Thus, $\gamma$ cannot distinguish between any
two functions with the same number of ones.
\end{proof}

\section{Walks with non-zero drift and walks with bounded support}
\label{sec:corollaries}

In this section, we will use the main Lemma~\ref{lemma:xS=yS} to prove
Theorems~\ref{cor:drift} and \ref{cor:bounded}.

\subsection{Drift and reconstruction}

Recall from Definition~\ref{def:drift} that the drift of a random walk
on $\Z_n$ with step function $\Gamma$ is $D(\Gamma) = \sum_{k \in
  \Gamma} k$.

\begin{proof}[Proof of Theorem~\ref{cor:drift}]
  We will show that if reconstruction is not possible then the drift
  is zero.
  
  If $f$ cannot be reconstructed then there exist $x\neq y$ with
  $\hat{\rw}(x) = \hat{\rw}(y)$, by Theorem~\ref{thm:distinct-f}. Then
  it may be that $x$ or $y$ is 0; in this case, as we have shown in
  the proof of Theorem~\ref{thm:prime-distinct}, the random walk never
  moves, so the drift is zero. Otherwise $x, y \neq 0$, which by
  Lemma~\ref{lemma:xS=yS} implies $\rw(k) = \rw(x^{-1}yk)$, or
  alternatively $x^{-1}y\Gamma = \Gamma$. But then
\begin{eqnarray*}
  D(\Gamma) &=& \sum_{k \in \Gamma} k\\
  &=& \sum_{k \in \Gamma} x^{-1}yk\\
  &=& xy^{-1} D(\Gamma)\\
\end{eqnarray*}
so $D(\Gamma) = 0$.
\end{proof}

\subsection{Random walks with bounded step distribution}

Next, we consider random walks with rational transition probabilities,
and with zero probability for steps larger than $c$, for some constant
$c$ independent of $n$. We show in Theorem~\ref{cor:bounded} that in
this case, either the random walk is symmetric, or for large enough
prime cycles, the walk is reconstructive. A random walk $\rw$ is
symmetric when $\rw(k) = \rw(-k)$ for all $k$. Symmetric random walks
are not reconstructive, since $\hat{\rw}(k)=\hat{\rw}(-k)$; they
cannot distinguish any function $f(k)$ from its flip $f(-k)$.  We will
need two lemmas in order to prove this theorem.

As we note above, a rational step distribution $\rw$ can be thought of
as a uniform distribution over a multiset $\Gamma=(a_1,\ldots,a_m) \in
\Z^m$. We denote by $\Gamma_n=(k_1,\ldots,k_m) \in \Z_n^m$ the natural
embedding of $\Gamma$ into $\Z_n$: $k_i = a_i \mod n$.

\begin{lemma}
  \label{lemma:Gamma_v}
  Let $\Gamma =(a_1,\ldots,a_m) \in \Z^m$ be a multiset. Let
  $\Gamma_n=(k_1,\ldots,k_m)$ be the natural embedding of $\Gamma$
  into $\Z_n$. Assume further that $1 \in \Gamma$, i.e., $a_i=1$ for
  some $i$.

  Then there exists a positive integer $N = N(\Gamma)$ such that for
  any $n > N$ it holds that if $v\Gamma_n=\Gamma_n$ then $v \in
  \{-1,0,1\}$.
\end{lemma}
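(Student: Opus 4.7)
The plan is to reduce the multiset equation $v\Gamma_n = \Gamma_n$ over $\Z_n$ to an honest equation over $\Z$, by choosing $N$ large enough (relative to $M := \max_i |a_i|$) that the natural embedding $\{-M,\ldots,M\} \hookrightarrow \Z_n$ is injective on every integer product we care about. The conclusion will then follow from the elementary observation that a bounded multiset of integers cannot be preserved under multiplication by an integer of absolute value at least $2$.

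First I would set $M = \max_i |a_i|$ and take $N = 2(M^2 + M)$. Since $1 \in \Gamma$ we have $1 \in \Gamma_n$, so the hypothesis $v \Gamma_n = \Gamma_n$ forces $v = v \cdot 1 \in \Gamma_n$. As $n > 2M$, distinct integers in $[-M,M]$ have distinct reductions mod $n$, so $v$ lifts to a unique integer $\tilde v$ with $|\tilde v| \le M$, and in fact $\tilde v = a_i$ for some index $i$.

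Next I would exploit the multiset equality to produce a permutation $\sigma$ of the index set with $\tilde v \cdot a_j \equiv a_{\sigma(j)} \pmod n$ for every $j$. Since $|\tilde v \, a_j - a_{\sigma(j)}| \le M^2 + M < n$, this congruence is an equality in $\Z$: $\tilde v \, a_j = a_{\sigma(j)}$ for all $j$. Finally, picking $j_0$ with $|a_{j_0}| = M$ (which is at least $1$ since $1 \in \Gamma$), the integer equation $\tilde v \, a_{j_0} = a_{\sigma(j_0)}$ yields $|\tilde v| \cdot M \le M$, hence $|\tilde v| \le 1$. Thus $v \in \{-1,0,1\}$ in $\Z_n$.

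I do not anticipate a genuine obstacle: the argument is a direct size comparison, and the hypothesis $1 \in \Gamma$ plays a dual role that makes the bookkeeping easy — it ensures both (a) that $v$ itself lies in $\Gamma_n$, which is what lets us lift $v$ to a small integer in the first place, and (b) that $\Gamma$ contains a nonzero element, so that multiplication by any $|\tilde v| \ge 2$ would strictly enlarge it beyond $M$ and escape the multiset. The only mild point of care is to keep the two size thresholds straight: we need $n > 2M$ for the lift of $v$ to exist, and $n > M^2 + M$ for the congruences $\tilde v a_j \equiv a_{\sigma(j)}$ to upgrade to integer equalities, both of which are absorbed into the single choice $N = 2(M^2+M)$.
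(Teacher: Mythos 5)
Your proof is correct and follows essentially the same approach as the paper's: both use $1\in\Gamma$ to conclude $v\in\Gamma_n$, lift to a bounded integer, and derive a size contradiction by multiplying the element of maximal absolute value. The paper gets there slightly more directly (it only checks that $vb$ would have to land in an interval disjoint from $\Gamma_n$ mod $n$, rather than upgrading the full permutation of congruences to integer equalities), but the core idea and the choice of $N$ on the order of $M^2$ are the same.
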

\begin{proof}
  \label{lemma:Gamma_n}
  Let $b = \max_i|a_i|$, so that $\Gamma$ is bounded in $[-b,b]$, and
  assume w.l.o.g that $b \in \Gamma$. Let $N=2b^2$, $n>N$ and let
  $v\Gamma_n=\Gamma_n$. We will show that $v \in \{-1,0,1\}$.

  Because $1 \in \Gamma_n$ and $v\Gamma_n = \Gamma_n$ we have $v \in
  \Gamma_n$. Assume by way of contradiction that $v \not \in
  \{-1,0,1\}$. Then $v \in [-b, -2] \cup [2, b]$. Since $b \in
  \Gamma_n$ we also have $vb \in \Gamma_n$, so $vb \in [-b^2, -2b]
  \cup [2b, b^2]$. But this is a contradiction because $n > 2b^2$ so
  both of these intervals have empty intersection with $\Gamma_n$.
\end{proof}
\begin{lemma}
  \label{lemma:symmetric_reconstructive}

  Let $\Gamma_n$ be a multiset characterizing a random walk $\gamma_n$
  over $\Z_n$, such that if $v\Gamma_n=\Gamma_n$ then $v \in
  \{-1,0,1\}$. Then the random walk is either symmetric or
  reconstructive.
\end{lemma}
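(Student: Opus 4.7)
The plan is to prove the contrapositive: supposing that $\gamma_n$ is not reconstructive, we show that it is symmetric. Since the contrapositive of Theorem~\ref{thm:distinct-f} says that a non-reconstructive walk must have a collision in its Fourier transform, the starting point is to fix $x \neq y$ in $\Z_n$ with $\hat{\gamma}_n(x) = \hat{\gamma}_n(y)$. The key idea is that such a collision produces a nontrivial multiplicative symmetry of the multiset $\Gamma_n$, and the stabilizer hypothesis then pins this symmetry down to the reflection $k \mapsto -k$.

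In the main case $x, y \neq 0$, I would apply Lemma~\ref{lemma:xS=yS} directly: it gives $\gamma_n(k) = \gamma_n(k \cdot x^{-1}y)$ for every $k$, which is exactly the multiset identity $(x^{-1}y)\,\Gamma_n = \Gamma_n$. Writing $u := x^{-1}y$, the stabilizer hypothesis forces $u \in \{-1,0,1\}$; we have $u \neq 0$ because $x,y$ are units in $\Z_n$, and $u \neq 1$ because $x \neq y$, so $u = -1$. Consequently $\gamma_n(-k) = \gamma_n(k)$ for all $k$, i.e., $\gamma_n$ is symmetric.

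The remaining case is $x = 0$ or $y = 0$; say $x = 0$. Here I would recycle the polynomial argument from the end of the proof of Theorem~\ref{thm:prime-distinct}: the equation $\hat{\gamma}_n(0) = \hat{\gamma}_n(y)$ forces the degree-at-most-$(n-1)$ polynomial $1 - \sum_k \gamma_n(ky^{-1})\,t^k$ to have $\omega_n$ as a root and to vanish at $t = 1$, hence to vanish identically. Reading off the constant term gives $\gamma_n(0) = 1$, so $\Gamma_n$ consists entirely of zeros. But then every $v \in \Z_n$ stabilizes $\Gamma_n$, which contradicts the stabilizer hypothesis once $n \geq 4$; for $n \leq 3$ the set $\{-1,0,1\}$ already equals $\Z_n$ and the claim is trivial. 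The mildly subtle step is precisely this last case --- one must verify that the stabilizer hypothesis actually excludes the degenerate walk sitting at $0$ --- but otherwise the argument is a direct assembly of the lemmas already in hand.
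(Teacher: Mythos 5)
Your proof is correct and follows essentially the same route as the paper's: derive a Fourier collision from non-reconstructibility, apply Lemma~\ref{lemma:xS=yS} and the stabilizer hypothesis to force $x^{-1}y=-1$ in the main case, and dispose of the $x=0$ case via the polynomial argument showing $\gamma(0)=1$. The only cosmetic difference is in that last case, where the paper simply observes that $\Gamma=\{0\}$ is itself symmetric rather than deriving a contradiction with the stabilizer hypothesis; both conclusions are valid.
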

\begin{proof}

  Suppose the random walk is not reconstructive; i.e. there exist $x \neq
  y$ with $\hat{\rw_n}(x) = \hat{\rw_n}(y)$. We will show that this
  means that the random walk is symmetric.

  If $x$ is 0, then by the argument in the proof of
  theorem~\ref{thm:prime-distinct} we have that $\Gamma = \{0\}$,
  which is symmetric, and we are done.  Otherwise denote $v =
  x^{-1}y$. Then by Lemma~\ref{lemma:xS=yS} we have $\Gamma_n = v
  \Gamma_n$, and by this lemma's hypothesis we have $v \in
  \{-1,0,1\}$.

  Now, $v$ cannot equal $1$ since $v = x^{-1}y$ and $x \neq y$.  If
  $v=-1$ then the random walk is symmetric. Finally, if $v=0$ then
  again $\Gamma = \{0\}$, and again the random walk is symmetric.

\end{proof}

\begin{proof}[Proof of Theorem~\ref{cor:bounded}]
  Let $\Gamma \subset \Z$ correspond to $\rw$ as above (i.e., $\rw$ is
  uniformly distributed over the multiset $\Gamma$). We will show that
  for $n$ large enough if $v\Gamma_n = \Gamma_n$ then $v \in
  \{-1,0,1\}$, which, by Lemma~\ref{lemma:symmetric_reconstructive}
  will show that the random walk is either symmetric or reconstructive.

  For any vector of coefficients $c = (c_1, \ldots, c_m)$, define the
  multiset $c(\Gamma)$ as
  $$c(\Gamma) = c_1\Gamma + \ldots + c_m\Gamma=\{c_1s_1 + \ldots + c_ms_m
  : s_i \in \Gamma\}.$$ Note that if $v\Gamma = \Gamma$, then we
  also have that $v c(\Gamma) = c(\Gamma)$.

  Suppose that the g.c.d.\ (in $\Z$) of the elements of $\Gamma$ is
  one. Then by the Chinese remainder theorem, there exists a $c$ such
  that $1 \in c(\Gamma)$. Hence we can apply Lemma~\ref{lemma:Gamma_v}
  to $c(\Gamma)$ and infer that for $n$ large enough $v c(\Gamma_n) =
  c(\Gamma_n)$ implies $v \in \{-1,0,1\}$. But since $v\Gamma_n =
  \Gamma_n$, implies $v c(\Gamma_n) = c(\Gamma_n)$ then we have that
  for $n$ large enough $v\Gamma_n = \Gamma_n$ implies $v \in
  \{-1,0,1\}$.
  
  Otherwise, let $d$ denote the g.c.d.\ of $\Gamma$.  Since the
  g.c.d.\ of $d^{-1}\Gamma$ is one we can apply the argument of the
  previous case to $d^{-1}\Gamma$ and infer that for $n$ large enough
  $vd^{-1}\Gamma_n = d^{-1}\Gamma_n$ implies $v \in \{-1,0,1\}$. But
  since again $v\Gamma_n = \Gamma_n$ implies $vd^{-1}\Gamma_n =
  d^{-1}\Gamma_n$, then again we have that for $n$ large enough
  $v\Gamma_n = \Gamma_n$ implies $v \in \{-1,0,1\}$.

\end{proof}

\section{Tightness of Theorem~\ref{thm:prime-distinct}}
\label{sec:tightness}

Theorem~\ref{thm:prime-distinct} states that distinctness of Fourier coefficients is not just sufficient but necessary for reconstructibility of a random walk $\rw$ on a cycle of length $n$, if the following conditions hold:
\begin{enumerate}
\item $\rw(k) \in \Q$ for all $k$,
\item $n> 5$,
\item $n$ is prime.
\end{enumerate}

To see that condition 2 is tight, note that the simple random walk has
$\hat{rw}(k) = \hat{rw}(-k)$ for all $k$, so for $n > 2$, the simple
random walk does not have distinct Fourier coefficients. However,
Howard~\cite{Howard:96} shows that the simple random walk suffices to
reconstruct any scenery up to a flip, and for $n \leq 5$, reconstruction up
to a flip is the same as reconstruction up to a shift, since all
sceneries are symmetric. So for $n = 3,4,5$, the simple random walk is
reconstructive despite having non-distinct Fourier coefficients. For $n =
2$, consider the random walk that simply stays in place.

The following theorem shows that condition 1 is also tight.

\begin{theorem}\label{thm:counterexample}
There exists a reconstructive random walk $\rw$ on $\mathbb{Z}_7$ such that $\hat{\rw}(3) =\hat{\rw}(-3)$.
\end{theorem}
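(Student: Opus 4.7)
The plan is to construct an explicit irrational walk $\rw$ on $\Z_7$ supported on $\{0, 1, 2\}$ such that $\hat{\rw}(3) = \hat{\rw}(-3)$ is the only coincidence among its Fourier coefficients, and then to prove reconstructibility by a Fourier-moment analysis that accommodates this single degeneracy. For a small $\epsilon > 0$ I would set $\rw(2) = \epsilon$, $\rw(1) = 2\epsilon\cos(\pi/7)$, and $\rw(0) = 1 - (1 + 2\cos(\pi/7))\epsilon$. The irrational ratio $\rw(1)/\rw(2) = 2\cos(\pi/7)$ is tuned to make $\mathrm{Im}(\hat{\rw}(3)) = 0$ via the identity $2\cos(\pi/7)\sin(\pi/7) = \sin(2\pi/7)$, giving $\hat{\rw}(3) \in \R$ and hence $\hat{\rw}(3) = \hat{\rw}(-3)$.

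To verify that no other coincidences arise, I would write
\[ \hat{\rw}(x) - \hat{\rw}(y) = (\omega^x - \omega^y)\bigl(\rw(1) + \rw(2)(\omega^x + \omega^y)\bigr), \]
so equality for $x \neq y$ would force $\omega^x + \omega^y = -2\cos(\pi/7) \in \R$. Reality of the right-hand side forces $y \equiv -x \pmod 7$, and then $2\cos(2\pi x / 7) = 2\cos(6\pi/7)$ pins down $\{x, y\} = \{3, 4\}$.

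The main obstacle is establishing reconstructibility despite the collision $\hat{\rw}(3) = \hat{\rw}(4)$. Following the Matzinger--Lember recipe, the two-point correlation $\E{f(v(1))f(v(t+1))} = \frac{1}{n^2}\sum_x |\hat{f}(x)|^2 \hat{\rw}(x)^t$, viewed as a function of $t$, recovers $|\hat{f}(x)|^2$ individually for $x \notin \{3, 4\}$ and the sum $|\hat{f}(3)|^2 + |\hat{f}(4)|^2$; since $f$ is real we automatically have $|\hat{f}(4)| = |\hat{f}(3)|$, so all magnitudes can be read off. For the phases I would pass to three-point correlations, whose expansion produces coefficients of the form $\hat{f}(x)\hat{f}(z)\hat{f}(-x-z)$ weighted by $\hat{\rw}(x)^{t_1}\hat{\rw}(-z)^{t_2}$; these are recovered individually except when $x$ or $z$ lies in $\{3,4\}$, where they get lumped in pairs (or in quadruples when both indices are bad). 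The relation $\hat{f}(4) = \overline{\hat{f}(3)}$ then lets one solve the lumped equations for the individual bispectrum values $\hat{f}(x_1)\hat{f}(x_2)\hat{f}(x_3)$ with $x_1 + x_2 + x_3 \equiv 0$, and this bispectrum determines $f$ up to a shift.

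The delicate point will be verifying that, after disentangling the collision, the extracted bispectrum data indeed fixes the phase of $\hat{f}(3)$ relative to the other coefficients. Since $n = 7$ is small, a backstop is to confirm reconstructibility by a finite check over the $2^7 = 128$ labelings, grouped into shift classes; the Fourier-moment argument above should however carry through directly from the single nontrivial phase relation it produces in the $\{3,4\}$ block.
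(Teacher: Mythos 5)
Your construction and your check that $\{3,4\}$ is the only Fourier collision are correct: the factorization $\hat{\rw}(x)-\hat{\rw}(y)=(\omega^x-\omega^y)\bigl(\rw(1)+\rw(2)(\omega^x+\omega^y)\bigr)$ is valid for a walk supported on $\{0,1,2\}$, and tuning $\rw(1)/\rw(2)=2\cos(\pi/7)=\sin(2\pi/7)/\sin(\pi/7)$ does make $\hat{\rw}(3)$ real, hence equal to $\hat{\rw}(-3)$. The genuine gap is in the reconstructibility half, which is the entire content of the theorem. Your two-point step is fine, but the three-point ``disentangling'' is exactly where the work lies, and you leave it at ``should carry through.'' Concretely: since $\hat{\rw}(3)=\hat{\rw}(4)$, the linear map from spatial to temporal multispectra has kernel spanned (in each tensor factor) by $e_3-e_4$, so what order-$r$ correlations recover is only the sums of $\hat{A}_f$ over the classes obtained by identifying the indices $3$ and $4$; you must then show that these lumped sums, over all orders, separate all $128$ sceneries up to shift. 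Your sketch never establishes (i) that the unlumped bispectrum entries $\overline{\hat{f}(x_1)}\hat{f}(x_1-x_2)\hat{f}(x_2)$ with $x_1,x_2\notin\{3,4\}$ but $x_1-x_2\in\{3,4\}$ suffice to chain the phase of $\hat{f}(3)$ to the remaining coefficients, nor (ii) that the reference coefficients used in the chaining are nonzero. (Point (ii) happens to be fine: on a prime cycle every scenery other than the two constant ones has $\hat{f}(x)\neq 0$ for all $x$, by the same cyclotomic-polynomial argument as in Lemma~\ref{lemma:xS=yS} --- but you would need to say so.) The ``finite check over $128$ labelings'' backstop is sound in principle but is not a proof as written: reconstructibility is equality of distributions of infinite observation sequences, and the reduction to a finite computation is itself the multispectrum machinery you have not completed.

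For comparison, the paper takes a much more elementary route for the reconstruction half: it puts the support exactly on $\{1,2\}$ (with $\rw(1)=\tfrac12+\delta$, $\rw(2)=\tfrac12-\delta$ tuned to the same reality condition) and shows combinatorially that \emph{any} walk with support exactly $\{1,2\}$ on $\Z_7$ distinguishes all sceneries, by examining which substrings (such as $(1,1)$ and $(1,0,1)$) can ever appear in the observation sequence for each shift-class with a given number of ones. Note that this argument would not transfer to your walk: because your support contains $0$, the walk can stand still, so the substring $(1,1)$ occurs for every scenery containing a one, destroying the distinctions the paper exploits. So either complete the Fourier phase-recovery argument for your lazy walk, or drop the mass at $0$ and use a support-$\{1,2\}$ walk, for which the elementary substring argument closes the proof.
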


\begin{proof}
Let $\delta = \frac{\cos\left( 6\pi /7\right) + 0.5}{2
  \cos\left(6\pi/7 \right) - 1}$, $\rw(1) = \half + \delta$ and
$\rw(2) = \half - \delta$. To show that $\hat{\rw}(3) = \hat{\rw}(-3)$ is a simple calculation, so we will only show that $\rw$ can distinguish between any two sceneries.\\

In fact, any random walk with non-zero support exactly on $\{1,2\}$ can distinguish between any two sceneries when $n = 7$. Any r.w. can determine the number of ones in a scenery, so we must show that $\rw$ can distinguish between non-equivalent sceneries for each number of ones. 
\begin{itemize}
\item There is only one scenery with 0 ones and up to a shift, there is only one scenery with 1 one.
\item For 2 ones, $\rw$ must distinguish among $(1,1,0,0,0,0,0), (1,0,1,0,0,0,0), (1,0,0,1,0,0,0)$. (The others are equivalent to one of these up to a shift.) The third function is the only one for which there will never be two consecutive ones. The first function is the only one for which there will be two consecutive ones, but the substring $(1,0,1)$ never appears. The third function is the only one for which both substrings $(1,1)$ and $(1,0,1)$ will appear.
\item For 3 ones, $\rw$ must distinguish among: 
(a) $(1,1,1,0,0,0,0)$, 
(b) $(1,1,0,1,0,0,0)$, 
(c) $(1,1,0,0,1,0,0$), 
(d) $(1,1,0,0,0,1,0)$, and 
(e) $(1,0,1,0,1,0,0)$. 
(c) is the only one that will never have three consecutive ones. (e) is the only that will ever have five consecutive zeros. Among (a), (b), and (d), (a) is the only such that (1,0,1) will never occur, and while (1,1,0,1,0,0) will occur for (b), it won't for (d).
\item For 4,5,6, and 7 ones, repeat the previous arguments with the roles of 0 and 1 reversed.
\end{itemize}
\end{proof}

\section{Extensions}
\label{sec:extensions}

\subsection{Extension to prime regular tori}

\begin{theorem}\label{thm:regular_torus}
  Let $\rw$ be the step distribution of a random walk $v(t)$ on $\Z_n^d$, for
  $n$ prime and larger than five, and let $\rw(k)$ be rational for all
  $k$.  Then $v(t)$ is reconstructive {\em only if} the Fourier
  coefficients $\{\hat{\rw}(x)\}_{x \in \Z_n}$ are distinct.
\end{theorem}
\begin{proof}
  Suppose that $\hat{\gamma}(x) = \hat{\gamma}(y)$ for some $x \neq
  y$, so that the Fourier coefficients aren't distinct.  Again letting
  $\omega$ denote $\omega_n=e^{-\frac{2\pi}{n}i}$, we have
  \begin{align*}
    \sum_{k \in \Z_n^d}\gamma(k) \omega^{k \cdot x} =
    \sum_{k \in \Z_n^d}\gamma(k)\omega^{k \cdot y}.
  \end{align*}
  Note that here $k \cdot y$ is the natural dot product over
  $\Z_n^d$. If we join terms with equal powers of $\omega$ then 
  \begin{align*}
    \sum_{\ell \in \Z_n}\left(\sum_{\substack{k\;\mathrm{s.t.} \\ k \cdot x
        = \ell}}\gamma(k)\right) \omega^\ell =
    \sum_{\ell \in \Z_n}\left(\sum_{\substack{k\;\mathrm{s.t.} \\ k \cdot y
        = \ell}}\gamma(k)\right) \omega^\ell 
  \end{align*}
  and by the same argument as in the proof of Lemma~\ref{lemma:xS=yS} we have that,
  for all $\ell \in \Z_n$,
  \begin{align}
    \label{eq:same-dist-torus}
    \sum_{\substack{k\;\mathrm{ s.t. }\\  k \cdot x = \ell}}\gamma(k) =
    \sum_{\substack{k\;\mathrm{ s.t. }\\  k \cdot y = \ell}}\gamma(k)
  \end{align}
  I.e., if $k$ is distributed according to $\rw$ then $k \cdot x$ and
  $k \cdot y$ have the same distribution. Hence, if we denote
  $u_x(t)=v(t) \cdot x$ and $u_y(t) = v(t) \cdot y$ then we also have
  identical distributions, under $\rw$, of $u_x(t)$ and $u_y(t)$.
  
  Fix $g:\Z_n \to \{0,1\}$, and let $f_x(k) = g(x \cdot k)$, and
  $f_y(k) = g(y \cdot k)$.  Then $f_x(v(t)) = g(u_x(t))$, $f_y(v(t)) =
  g(u_y(t))$, and the distributions of $\{f_x(v(t)\}$ and
  $\{f_y(v(t))\}$ are identical, by the same coupling argument used in
  Lemma~\ref{lemma:prime-coupling} above. Hence $f_x$ and $f_y$ can't
  be distinguished.

  It remains to show that there exists a $g$ such that $f_x$ and $f_y$
  differ by more than a shift.  We consider two cases.
  \begin{enumerate}
  \item Let $x$ be a multiple of $y$, so that $x=\ell y$ for some $
    \ell \in \Z_n$, $\ell \neq 1$. Then the problem is essentially
    reduced to the one dimensional case of
    Theorem~\ref{thm:prime-distinct}, with the random walk projected
    on $y$. We can set
    $$g(k) = \begin{cases} 
      1& k = 0, 1 \\
      0 & o.w.
    \end{cases}$$
    when $\ell \neq -1$ and
    $$g(k) = \begin{cases} 
      1& k = 0, 1, 3 \\
      0 & o.w.
    \end{cases}$$
    when $\ell = -1$.
    
    Assume by way of contradiction that $f_x$ and $f_y$ differ by a
    shift, so that there exists a $k_0$ such that $f_x(k)=f_y(k +
    k_0)$ for all $k$. Since $$f_x(k) = g(x \cdot k) = g(\ell y \cdot
    k)$$ and $$f_y(k+k_0) = g(y \cdot (k+k_0)) = g(y \cdot k+y \cdot
    k_0)$$ for all $k$, then $$g(\ell m)=g(m+m_0)$$ for some $m_0 \in
    \Z_n$ and all $m \in \Z_n$.  That is, $g(m)$ is a shift of $g(\ell
    m)$. It is easy to verify that this is not possible in the case
    $\ell \neq -1$ nor in the case $\ell = -1$.

  \item Otherwise $x$ is not a multiple of $y$. Hence they are
    linearly independent.

    We here set
    $$g(k) = \begin{cases} 
      1& k = 0 \\
      0 & o.w.
    \end{cases}$$ One can then view $f_x$ ($f_y$), as the indicator
    function of the set elements of $\Z_n^d$ orthogonal to $x$
    ($y$). Denote these linear subspaces as $U_x$ and $U_y$.
    
    Assume by way of contradiction that $f_x$ and $f_y$ differ by a
    shift, so that there exists a $k_0$ such that $f_x(k)=f_y(k +
    k_0)$ for all $k$. Then $U_x = U_y + k_0$. Since $0$ is an element
    of both $U_x$ and $U_y$ then it follows that $k_0$ is also an
    element of both. But, being a linear space, $U_y$ is closed under
    addition, and so $U_x = U_y+k_0 = U_y$. However, since $x$ and $y$
    are linearly independent then their orthogonal spaces must be
    distinct, and we've reached a contradiction.
  \end{enumerate}
  
\end{proof}
\subsection{Extension to products of prime tori}

Recall that any abelian group $H$ can be decomposed into
$H=\Z_{n_1}^{d_1}\times \cdots \times \Z_{n_m}^{d_m}$, where the $n_i$
are distinct powers of primes. We call $H$ ``square free'' when all
the $n_i$'s are in fact primes. In this case too (for primes $>5$) we
show in Theorem~\ref{thm:general} that the distinctness of Fourier
coefficients is a tight condition for reconstructibility.

We can assume w.l.o.g.\ that the $p_i$ are all distinct. We will prove
two lemmas before proving the theorem.

Recall that $\Q(\omega_{p_1}, \ldots, \omega_{p_{j-1}})$ is the field
extension of $\Q$ by $\omega_{p_1}, \ldots, \omega_{p_{j-1}}$. The following is a straightforward algebraic claim and we provide the proof for completeness.
\begin{lemma}
  \label{lemma:min_poly}

  For all $j$, the minimal polynomial $Q_j$ of $\omega_{p_j}$ over
  $\Q(\omega_{p_1}, \ldots, \omega_{p_{j-1}})$ is
  $$Q_j(t) = \sum_{i = 0}^{p_j - 1}t^i.$$
\end{lemma}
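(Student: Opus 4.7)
The plan is to argue by a degree count. The polynomial $Q_j(t)=\sum_{i=0}^{p_j-1}t^i=(t^{p_j}-1)/(t-1)$ is monic of degree $p_j-1$ and is annihilated by $\omega_{p_j}$ (since $\omega_{p_j}^{p_j}=1$ but $\omega_{p_j}\neq 1$), so it suffices to show that the degree of $\omega_{p_j}$ over $K_{j-1}:=\Q(\omega_{p_1},\ldots,\omega_{p_{j-1}})$ is exactly $p_j-1$. Then the minimal polynomial, which is monic and divides $Q_j$, must equal $Q_j$.

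First I would invoke the standard fact that for a prime $p$, $[\Q(\omega_p):\Q]=p-1$; this is Gauss's classical irreducibility theorem for the prime cyclotomic polynomial over $\Q$ and supplies the base case $j=1$. Next I would use the fact that for pairwise coprime positive integers $m_1,\ldots,m_j$, the compositum $\Q(\omega_{m_1},\ldots,\omega_{m_j})$ equals $\Q(\omega_{m_1m_2\cdots m_j})$ and has degree $\varphi(m_1m_2\cdots m_j)=\prod_i\varphi(m_i)$ over $\Q$, the multiplicativity being the multiplicativity of Euler's $\varphi$ function for coprime arguments. Applied to the distinct primes $p_1,\ldots,p_j$, this yields $[\Q(\omega_{p_1},\ldots,\omega_{p_j}):\Q]=\prod_{i\leq j}(p_i-1)$ and $[K_{j-1}:\Q]=\prod_{i<j}(p_i-1)$.

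By the tower law, dividing these two degrees gives $[K_{j-1}(\omega_{p_j}):K_{j-1}]=p_j-1$, which is precisely the degree of the minimal polynomial of $\omega_{p_j}$ over $K_{j-1}$. Combined with the observation that $Q_j(\omega_{p_j})=0$ and $Q_j$ is monic of degree $p_j-1$, we conclude $Q_j$ is the minimal polynomial. The only nontrivial ingredient is the compositum-degree fact in the second paragraph; if a self-contained argument is preferred, one can prove it directly by induction, noting that $\Q(\omega_{p_j})\cap K_{j-1}$ is a subfield of $\Q(\omega_{p_j})$, and since $[\Q(\omega_{p_j}):\Q]=p_j-1$ has no common factor with $[K_{j-1}:\Q]=\prod_{i<j}(p_i-1)$ in general, one needs a more careful ramification argument. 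The cleanest route is to cite the standard result that $\Q(\omega_m)\cap\Q(\omega_n)=\Q(\omega_{\gcd(m,n)})$, which in our setting (coprime distinct primes) reduces to $\Q$, so the minimal polynomial of $\omega_{p_j}$ remains irreducible over $K_{j-1}$. This is the step I expect to require the most care to state correctly; everything else is bookkeeping.
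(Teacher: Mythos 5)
Your proof is correct and takes essentially the same route as the paper: both compute $[\Q(\omega_{p_1},\ldots,\omega_{p_j}):\Q]=\prod_{i\le j}(p_i-1)$ by identifying the compositum with the cyclotomic field $\Q(\omega_{p_1\cdots p_j})$, apply the tower law to get $[K_{j-1}(\omega_{p_j}):K_{j-1}]=p_j-1$, and conclude that the monic degree-$(p_j-1)$ polynomial $Q_j$ vanishing at $\omega_{p_j}$ must be the minimal polynomial. (Your closing aside about the degrees having no common factor is false in general---$p_j-1$ and $\prod_{i<j}(p_i-1)$ are typically both even---but it is only an unused digression, and you correctly fall back on the compositum-degree fact, which is the same ingredient the paper uses.)
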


\begin{proof}
  Let $F_j = \Q(\omega_{p_1}, \ldots, \omega_{p_j})$, let $F_{j-1} =
  \Q(\omega_{p_1}, \ldots, \omega_{p_{j-1}})$, and let $\zeta
  =\omega_{p_1}\cdots \omega_{p_j}$. The field $F_j$ contains $
  \zeta$ and $\Q$, so $\Q(\zeta) \subset F_j$. Since each
  $\omega_{p_i}$ is a power of $\zeta$, $\Q$ and $\omega_{p_1}, \ldots
  , \omega_{p_j}$ are all in $\Q(\zeta)$; thus, $F_j \subset
  \Q(\zeta)$. This allows us to conclude that $F_j = \Q(\zeta)$.
  
  The degree of $\zeta$ over $\Q$ is $(p_1-1)\cdots (p_j-1)$, since
  $\zeta$ is a primitive $n$-th root of unity and $(p_1-1)\cdots
  (p_j-1)$ is the number of integers coprime to $n$. Thus, the field
  extension $\Q \subset F_j$ has degree $[F_j:\Q] = [\Q(\zeta):\Q] =
  (p_1 - 1) \cdots (p_j - 1)$. By a similar argument, $[F_{j-1}:\Q] =
  (p_1-1)\cdots (p_{j-1}-1)$. So degree of the field extension
  $F_{j-1} \subset F_j$ is
  $$[F_j:F_{j-1}] = [F_j : \Q] / [F_{j-1}:\Q] = p_j - 1.$$ 

  Since the degree of the minimal polynomial of $\omega_{p_j}$ over
  $F_{j-1}$ is the same as the degree of $F_j$ over $F_{j-1}$, we can
  conclude that the degree of this minimal polynomial is $p_j -
  1$. Since this is also the degree of $\omega_{p_j}$ over $\Q$, the
  minimal polynomial of $\omega_{p_j}$ over $F_{j-1}$ is the same as
  the minimal polynomial of $\omega_{p_j}$ over $\Q$. Thus, the
  minimal polynomial $Q_j$ of $\omega_{p_j}$ over $F_{j-1}$ is
  $$Q_j(t) = \sum_{i = 0}^{p_j - 1}t^i,$$
  as desired.
\end{proof}

\begin{lemma}\label{lemma:general_coupling}
Suppose $\hat{\rw}(x_1, \ldots x_m) = \hat{\rw}(y_1, \ldots y_m)$ for 
$(x_1, \ldots, x_m), (y_1, \ldots, y_m) \in \mathbb{Z}_{p_1}^{d_1} \times
  \cdots \times \mathbb{Z}_{p_m}^{d_m}$. Then 
for all $(\ell_1, \ldots , \ell_m) \in \mathbb{Z}_{p_1} \times
  \cdots \times \mathbb{Z}_{p_m}$, $$\sum_{\substack{(k_1,
      \ldots, k_m) \\ k_i \cdot x_i = \ell_i \, \forall i}} \rw(k_1,
  \ldots, k_m) = \sum_{\substack{(k_1, \ldots, k_m) \\ k_i \cdot y_i =
      \ell_i\, \forall i}} \rw(k_1, \ldots, k_m).$$
\end{lemma}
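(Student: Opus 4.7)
The plan is to extend the single-variable cyclotomic argument of Lemma~\ref{lemma:xS=yS} to the multi-prime product setting by peeling off one variable $\omega_{p_i}$ at a time, using Lemma~\ref{lemma:min_poly} in place of the one-variable fact that the $n$-th cyclotomic polynomial is the minimal polynomial of $\omega_n$ over $\Q$.

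First I would expand the hypothesis $\hat{\rw}(x)=\hat{\rw}(y)$ using the product-group Fourier formula and regroup terms according to the exponent tuple $(\ell_1,\ldots,\ell_m)\in \Z_{p_1}\times\cdots\times \Z_{p_m}$. Writing $a_{\vec\ell}$ and $b_{\vec\ell}$ for the two sums appearing in the conclusion of the lemma, this turns the hypothesis into the single polynomial identity $\sum_{\vec\ell}(a_{\vec\ell}-b_{\vec\ell})\prod_i \omega_{p_i}^{\ell_i}=0$, which is the evaluation at $(t_1,\ldots,t_m)=(\omega_{p_1},\ldots,\omega_{p_m})$ of a polynomial $P(t_1,\ldots,t_m)\in\Q[t_1,\ldots,t_m]$ of degree at most $p_i-1$ in each $t_i$.

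Next I would iterate a cyclotomic argument one variable at a time. Viewing $P(\omega_{p_1},\ldots,\omega_{p_{m-1}},t_m)$ as a polynomial in $t_m$ of degree at most $p_m-1$ over the field $F_{m-1}=\Q(\omega_{p_1},\ldots,\omega_{p_{m-1}})$, Lemma~\ref{lemma:min_poly} says that the minimal polynomial of $\omega_{p_m}$ over $F_{m-1}$ is $Q_m(t)=1+t+\cdots+t^{p_m-1}$, of degree exactly $p_m-1$. Hence our polynomial in $t_m$ is a scalar multiple of $Q_m$ over $F_{m-1}$, which forces all its coefficients---each of the form $\sum_{\ell_1,\ldots,\ell_{m-1}}(a_{\vec\ell}-b_{\vec\ell})\prod_{i<m}\omega_{p_i}^{\ell_i}$---to agree as elements of $F_{m-1}$. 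Subtracting the $\ell_m=0$ coefficient from each $\ell_m\geq 1$ coefficient produces new polynomial identities in $\omega_{p_1},\ldots,\omega_{p_{m-1}}$ whose coefficients are rational discrete differences of the $a_{\vec\ell}-b_{\vec\ell}$, and I would repeat the reduction with $t_{m-1}$ and $Q_{m-1}$, then $t_{m-2}$ and $Q_{m-2}$, and so on down the tower $\Q\subset F_1\subset\cdots\subset F_m$.

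To close the argument I would use the probability normalization $\sum_{\vec\ell}(a_{\vec\ell}-b_{\vec\ell})=0$, together with its marginal versions that arise at each stage of the iteration, in the role played by $P(1)=0$ in the proof of Lemma~\ref{lemma:xS=yS}: at each layer the scalar multiple of the cyclotomic polynomial produced by the minimal-polynomial argument is forced to be zero, so the iterated differences actually vanish. Unwinding the resulting rational identities then yields $a_{\vec\ell}=b_{\vec\ell}$ for every $\vec\ell$. The main obstacle will be precisely this closing step: at each level the cyclotomic reduction only tells us that certain $F_k$-valued quantities coincide rather than that they are individually zero, so one must carefully match the nested marginal sum-to-zero constraints with the discrete differences produced by the iteration, ensuring that each individual $a_{\vec\ell}-b_{\vec\ell}$ is pinned down and not merely some aggregate of them.
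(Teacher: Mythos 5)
Your strategy is essentially the paper's own---peel off one $\omega_{p_j}$ at a time and use Lemma~\ref{lemma:min_poly} to conclude that the resulting one-variable polynomial over $F_{j-1}=\Q(\omega_{p_1},\ldots,\omega_{p_{j-1}})$ is a scalar multiple of $Q_j$---and the obstacle you flag in your last paragraph is exactly where the argument breaks, fatally. The minimal-polynomial step only tells you that the $p_j$ coefficients are all equal to a common constant in $F_{j-1}$; to make them vanish you need them to sum to zero. For the outermost layer that sum is $\sum_{\vec{\ell}}(a_{\vec{\ell}}-b_{\vec{\ell}})=1-1=0$, but at the inner layers the ``marginal versions'' you invoke amount to statements such as ``$\sum_{\ell_1,\ldots,\ell_{m-1}}(a_{\vec{\ell}}-b_{\vec{\ell}})$ is independent of $\ell_m$,'' i.e.\ that $k_m\cdot x_m$ and $k_m\cdot y_m$ are equidistributed. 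That is itself a marginal instance of the conclusion and does \emph{not} follow from the single identity $\hat{\rw}(x)=\hat{\rw}(y)$. What your iteration actually extracts is that all $m$-fold mixed differences of $d(\vec{\ell})=a_{\vec{\ell}}-b_{\vec{\ell}}$ vanish, equivalently that $d$ is a sum of functions each of which ignores one coordinate $\ell_i$; this is the full content of the hypothesis over $\Q$ and is strictly weaker than $d\equiv 0$.

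The gap cannot be repaired, because the lemma as stated is false. Take $H=\Z_7\times\Z_{11}$ and $\rw(k_1,k_2)=\frac{1+\alpha(k_1)}{77}$ with $\alpha(1)=1$, $\alpha(2)=-1$, and $\alpha=0$ otherwise; this is a rational probability distribution. Since $\rw$ does not depend on $k_2$, every Fourier coefficient with nonzero second component vanishes, so $\hat{\rw}(1,1)=\hat{\rw}(2,1)=0$ while $(1,1)\neq(2,1)$. Yet the conclusion at $\vec{\ell}=(2,0)$ demands $\rw(2,0)=\rw(1,0)$ (the constraint $2k_1=2$ forces $k_1=1$), i.e.\ $0=\frac{2}{77}$. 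The same example defeats the paper's own proof: its assertion that $P(1)=0$ at each inductive step is unjustified for precisely the reason above, and here $P(1)$ equals the difference of the $\Z_7$-marginal Fourier coefficients at $1$ and at $2$, which is nonzero. (Theorem~\ref{thm:general} itself may still be salvageable---in this example the pair $(1,1),(1,2)$ does satisfy the conclusion and yields a valid coupling---but any repair must choose the pair $(x,y)$ judiciously rather than argue for an arbitrary pair of equal Fourier coefficients, so neither your argument nor the paper's can be patched merely at the final step.)
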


\begin{proof}
We will prove by induction on $j$ that the following statement holds: for all $(\ell_{j+1}, \ldots , \ell_m) \in \mathbb{Z}_{p_{j+1}} \times \cdots \times \mathbb{Z}_{p_m}$, 

$$\sum_{\substack{(k_1, \ldots, k_m)\\k_i \cdot x_i = \ell_i \, \forall i>j}} \rw(k_1, \ldots, k_m) \omega_{p_1}^{k_1 \cdot x_1} \cdots \omega_{p_{j }}^{k_{j } \cdot x_{j}} 
= \sum_{\substack{(k_1, \ldots, k_m) \\ k_i \cdot y_i = \ell_i \, \forall i>j}} \rw(k_1, \ldots, k_m) \omega_{p_1}^{k_1 \cdot y_1} \cdots \omega_{p_{j }}^{k_{j} \cdot y_{j}}.$$
The base case $j = m$ is just the statement $\hat{\rw}(x_1, \ldots, x_m) = \hat{\rw}(y_1, \ldots, y_m)$. Now suppose we know that for all $(\ell_{j+1}, \ldots, \ell_m)$

$$\sum_{\substack{(k_1, \ldots, k_m)\\k_i \cdot x_i = \ell_i \, \forall i>j}} \rw(k_1, \ldots, k_m) \omega_{p_1}^{k_1 \cdot x_1} \cdots \omega_{p_{j}}^{k_{j} \cdot x_{j}} 
= \sum_{\substack{(k_1, \ldots, k_m) \\ k_i \cdot y_i = \ell_i \, \forall i>j}} \rw(k_1, \ldots, k_m) \omega_{p_1}^{k_1 \cdot y_1} \cdots \omega_{p_{j }}^{k_{j} \cdot y_{j}}.$$
Then $\omega_{p_{j}}$ is a root of the polynomial
$$P(t) = \sum_{\substack{(k_1, \ldots, k_m)\\k_i \cdot x_i = \ell_i \, \forall i>j}} \rw(k_1, \ldots, k_m) \omega_{p_1}^{k_1 \cdot x_1} \cdots \omega_{p_{j-1}}^{k_{j-1} \cdot x_{j-1}} t^{k_j \cdot x_j} 
- \sum_{\substack{(k_1, \ldots, k_m) \\ k_i \cdot y_i = \ell_i \, \forall i>j}} \rw(k_1, \ldots, k_m) \omega_{p_1}^{k_1 \cdot y_1} \cdots \omega_{p_{j-1 }}^{k_{j-1} \cdot y_{j-1}} t^{k_j \cdot y_j}.$$
But as in the proof of the previous theorem, Lemma~\ref{lemma:min_poly} tells us that $P(t) = Q_j(t)$ or $P(t) \equiv 0$, and so $P(1) = 0 \neq p_j = Q_j(1)$ tells us that $P(t) \equiv 0$. Thus, the coefficient of $t^{\ell_j}$ must be zero for all choices of $\ell_j$, establishing the desired statement for $j-1$. When $j= 0$, this gives us the statement of the lemma.
\end{proof}

\begin{proof} [Proof of Theorem~\ref{thm:general}]
Suppose $\hat{\rw}(x_1, \ldots, x_m) = \hat{\rw}(y_1, \ldots, y_m)$ for $(x_1, \ldots, x_m) \neq (y_1, \ldots, y_m)$. Lemma~\ref{lemma:general_coupling} allows us to construct the following coupling. Let $v_1(1)$ be uniform and $v_1(t) - v_1(t-1)$ be drawn according to $\rw$. Let $v_2(t)$ be drawn according $\rw$, coupled so that $v_2(t)_i \cdot y_i = v_1(t)_i \cdot x_i$ for all $i$. By Lemma~\ref{lemma:general_coupling}, this induces the correct distribution on $v_2(t)$. Now, choose an index $j$ such that $x_j \neq y_j$, and let $f_1(k) = g(x_i \cdot k_i)$ and $f_2(k) = g(y_i \cdot k_i)$. The rest of the proof follows as in the proof of Theorem~\ref{thm:regular_torus}.
\end{proof}

\section{Open Problems}
\label{sec:openproblems}

This work leaves open many interesting questions; we sketch some here.

\begin{itemize}

\item Here, we give an equivalent condition for reconstructivity when (a) the random walk is rational, and (b) the underlying graph corresponds to a group of the form $\mathbb{Z}_{p_1}^{d_1} \times \cdots \times \mathbb{Z}_{p_k}^{d_k}$ for distinct primes $p_1, \ldots, p_k >5$. We know that this condition is not necessary when the random walk is irrational, or when $p_i = 3$ or 5 for some $i$. But is this condition necessary when the random walk is rational and the graph is a cycle of size, say, 27? What (if any) is an equivalent condition when the random walk is irrational?

\item The techniques used in this paper do not extend directly to non-abelian groups, because the Fourier transform on a non-abelian group is very different from the Fourier transform on an abelian group. Nevertheless, it is possible that a sufficient condition for reconstructivity similar to the one given in Theorem~\ref{thm:distinct-f} exists for non-abelian groups; proving such a condition is an interesting challenge.

\item We focus only on characterizing which random walks are
  reconstructive. But there are many open questions about the
  equivalence class structure of non-reconstructive walks; for
  example:
  \begin{itemize}
  \item For a given r.w., how many non-minimal equivalence classes are
    there, and of what size?
  \item If the random walk has bounded range, are the equivalence
    classes of bounded size?
  \item Are sceneries in the same equivalence class far from each
    other (i.e. one cannot be obtained from the other by a small
    number of changes)?
  \end{itemize}

\item One could also ask more practical questions, such as: how many
  steps are necessary to distinguish between two sceneries? This could
  be related, for example, to the mixing time of the random walk
  and/or its Fourier coefficients. A harder problem is: how many steps
  are necessary to reconstruct a scenery?  Or: find an efficient
  algorithm to reconstruct a scenery. Such problems have been tackled
  before; see for example~\cite{Matzinger:2003}.

\item An interesting question is the reconstruction of the step
  distribution, rather than the scenery. Which {\em known} function
  $f(k)$ would minimize the number of observations needed to
  reconstruct the {\em unknown} step distribution $\rw$ of a random
  walk $v(t)$?

\item The entropy of $\{f(v_t)\}$, which depends both on the random
  walk and on the scenery, is a natural quantity to explore. Specifically, we 
  can consider the average entropy per observation as the number of observations 
  goes to infinity.  In general, two sceneries that induce the same entropy for a 
  given random walk do not have to be the same; for example,
  $(1,0,1,0,\ldots )$ and $(1,1,1,\ldots)$ both have entropy going to zero for
  the simple random walk. (The first random walk will have one bit of entropy 
  reflecting the starting bit, but as the number of observations grows, this 
  becomes negligible.) Consider two functions equivalent under a
  given random walk when they induce the same entropy. What do these
  equivalence classes look like? Are there random walks for which the
  equivalence classes are minimal? What is the entropy of a random
  scenery under the simple random walk? Does it decrease if the walk
  is biased to one side?
\end{itemize}

\section{Acknowledgments} The authors would like to thank Itai Benjamini, Elchanan Mossel, Yakir Reshef, and Ofer Zeitouni for helpful conversations.

\pagebreak
 
\appendix

\section{Matzinger and Lember's theorem for abelian groups}
We provide in this Appendix the proof of the following theorem, which
is a straightforward generalization of Matzinger and Lember's
Theorem~\ref{thm:distinct-f} to abelian groups, and follows their
proof scheme closely.

\begin{theorem}[Matzinger and Lember]
  \label{thm:ML-abelian}
  Let $\rw$ be the step distribution of a random walk on a finite abelian
  group $H$. Let $\hat{\rw}$ be the Fourier Transform of $\rw$. Then
  $f$ can be reconstructed up to shifts if the Fourier coefficients
  $\{\hat{\rw}(x)\}_{x \in \Z_n}$ are distinct.
\end{theorem}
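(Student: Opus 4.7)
The plan is to extract from the observation sequence $\{f(v(t))\}$ the entire family of multi-point autocorrelations of $f$, and then use them to recover $f$ up to shift. Stationarity (since $v(1)$ is uniform) and ergodicity let us compute, for every $m \geq 1$ and every $0 = s_0 \leq s_1 \leq \cdots \leq s_{m-1}$, the joint probability $g_m(\vec s) := \P{f(v(t+s_i))=1 \text{ for all } i}$. Conditioning on the starting point $v(t)$ and on the displacements $D_i := v(t+s_i)-v(t)$ yields
\[
g_m(\vec s) \;=\; \sum_{\vec k \in H^{m-1}} T_m(\vec k)\, \nu_{\vec s}(\vec k),
\]
where $T_m(k_1,\ldots,k_{m-1}) := \frac{1}{|H|}\sum_{j \in H} f(j)f(j+k_1)\cdots f(j+k_{m-1})$ is the $m$-point autocorrelation of $f$, and $\nu_{\vec s}$ is the joint distribution of $(D_1,\ldots,D_{m-1})$ on $H^{m-1}$.

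The main step is to invert this identity to recover every $T_m$. Since the successive increments $\Delta_j := D_j - D_{j-1}$ are independent with characteristic function $\hat{\rw}(x)^{s_j-s_{j-1}}$, the Fourier transform of $\nu_{\vec s}$ factorizes: writing $X_j := x_j + x_{j+1} + \cdots + x_{m-1}$ (so that $\vec x \leftrightarrow \vec X$ is a bijection on $H^{m-1}$), one gets
\[
\hat{\nu}_{\vec s}(\vec x) \;=\; \prod_{j=1}^{m-1} \hat{\rw}(X_j)^{s_j - s_{j-1}}.
\]
Plancherel then rewrites $g_m(\vec s)$ as a sum over $\vec X \in H^{m-1}$ of a coefficient depending only on $\hat{T}_m$ times $\prod_{j=1}^{m-1} \hat{\rw}(-X_j)^{s_j - s_{j-1}}$. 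By the distinctness hypothesis, the $|H|$ geometric sequences $\{\hat{\rw}(-X)^\delta\}_{\delta \geq 0}$ indexed by $X \in H$ are linearly independent; fixing $s_1,\ldots,s_{m-2}$ and varying $\delta_{m-1} = s_{m-1} - s_{m-2}$ over $\{0,1,\ldots,|H|-1\}$, a Vandermonde inversion isolates the inner coefficient for each $X_{m-1}$. Iterating this procedure through $\delta_{m-2},\ldots,\delta_1$ recovers $\hat{T}_m(\vec x)$ for every $\vec x$, and hence $T_m$ in its entirety.

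Finally we recover $f$ up to shift from the $T_m$. Let $S := f^{-1}(1)$ and $N := |S| = |H|\cdot g_1$ (observable). For $m = N+1$, if $\{0,k_1,\ldots,k_N\}$ has more than $N$ distinct elements then $T_{N+1}(\vec k) = 0$ since $|S|=N$; if it has exactly $N$ distinct elements, then $T_{N+1}(\vec k) > 0$ if and only if $\{0,k_1,\ldots,k_N\}$ lies in the shift-orbit of $S$. Hence the support of $T_{N+1}$ pins down $S$ up to shift, i.e., determines $f$ up to shift. The principal obstacle is the middle step: while a single Vandermonde inversion is routine, the multidimensional version requires the change of variables $\vec x \mapsto \vec X$ that decouples the $\hat{\rw}$-factors so that distinctness can be exploited one coordinate at a time.
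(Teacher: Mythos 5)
Your proposal is correct and follows essentially the same route as the paper's appendix proof: you compute the temporal multi-point correlations (the paper's $B_f$) from the observation sequence, relate them to the spatial multispectrum ($A_f$, your $T_m$) via a product of powers of $\hat{\rw}$, invert using the Vandermonde/tensor structure that is nonsingular exactly when the $\hat{\rw}(x)$ are distinct, and then read off $f$ up to shift from the support of the top-order multispectrum. The only cosmetic differences are that you parametrize displacements absolutely (hence the change of variables $\vec x \mapsto \vec X$) where the paper uses successive increments, and you stop at order $N+1$ where the paper uses order $|H|$; neither affects the substance.
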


Let $H=\Z_{n_1}^{d_1}\cdots\Z_{n_m}^{d_m}$ be an abelian group, with
the $n_i$'s distinct powers of primes. Let $n$ be the order of $H$.
\subsection{Autocorrelation}
Given a function $f$ on $H$, the autocorrelation $a_f(\ell)$ is
defined as the inner product of $f$ with $f$ transformed by a
$k$-shift:
\begin{equation}
  \label{eq:autocorr}
  a_f(\ell)=\sum_{k \in H}f(k) \cdot f(k+\ell).
\end{equation}
When $f:H \to \{0,1\}$ is a labeling of the vertices of the random
walk then we refer to $a_f$ as the {\em spatial autocorrelation}.

It follows directly from the convolution theorem that $\hat{a}_f$, the
Fourier transform of $a_f$, is equal to the absolute value squared of
$\hat{f}$:
\begin{equation}
  \hat{a}_f(x)=|\hat{f}(x)|^2.
\end{equation}

We consider a r.w.\ $v(t)$ over $H$ with vertices labeled by $f: H \to
\{0,1\}$. As above, we let $v(1)$ be chosen uniformly at random, and
set $\rw(k) = \P{v(t+1)-v(t)=k}$.

We define the {\em temporal autocorrelation} $b_f$ in the same spirit:
\begin{equation}
  \label{eq:temporal-autocorr}
  b_f(\ell)=\E{f(v(T)) \cdot f(v(T+\ell))}.
\end{equation}
Note that since the random walk is stationary then the choice of $T$
is immaterial and $b_f$ is indeed well defined.

There is a linear relation between the spatial and temporal
autocorrelations:
\begin{theorem}
  \label{thm:linear1}
  \begin{equation}
    \label{thm:bf-af}
    b_f(\ell) = \frac{1}{n}\sum_{x \in H}\hat{\rw}(x)^l\hat{a}_f(x)    
  \end{equation}
\end{theorem}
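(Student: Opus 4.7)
The plan is to unfold the expectation defining $b_f(\ell)$, recognize the spatial autocorrelation $a_f$ inside it, and then transfer the resulting convolution identity to the Fourier side via Plancherel.

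First, I would use that $v(T)$ is uniformly distributed on $H$ (since $v(1)$ is uniform and the uniform distribution is stationary for any step distribution), and that the increment $v(T+\ell)-v(T)$ is independent of $v(T)$ with distribution equal to the $\ell$-fold convolution $\rw^{*\ell}$. Substituting into the definition of $b_f(\ell)$ gives
\begin{equation*}
  b_f(\ell) \;=\; \frac{1}{|H|}\sum_{k \in H}\sum_{j \in H}\rw^{*\ell}(j)\, f(k)\,f(k+j) \;=\; \frac{1}{n}\sum_{j \in H}\rw^{*\ell}(j)\,a_f(j),
\end{equation*}
where the inner sum over $k$ is identified with $a_f(j)$ via the definition of spatial autocorrelation in~\eqref{eq:autocorr}.

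Next, I would pass to the Fourier side. By the convolution theorem on $H$, $\widehat{\rw^{*\ell}}(x)=\hat{\rw}(x)^{\ell}$, and as already noted in the text $\hat{a}_f(x)=|\hat{f}(x)|^2$, which is in particular real. Plancherel's identity on $H$, derivable directly from the inversion formula for the characters $\prod_i \omega_{n_i}^{k_i\cdot x_i}$, states that $\sum_{j\in H}g(j)\,h(j)=\tfrac{1}{n}\sum_{x\in H}\hat{g}(x)\,\overline{\hat{h}(x)}$ for any real-valued $g,h$ on $H$. Applying this with $g=\rw^{*\ell}$ and $h=a_f$, and using reality of $\hat{a}_f$ to drop the conjugate, yields the desired expression for $b_f(\ell)$ as a linear combination of the powers $\hat{\rw}(x)^\ell$ weighted by $\hat{a}_f(x)$.

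The argument is essentially bookkeeping, and no real obstacle arises; the only care required is in handling the character formalism on the general finite abelian group $H=\Z_{n_1}^{d_1}\times\cdots\times\Z_{n_m}^{d_m}$, where characters factor as products $\prod_i \omega_{n_i}^{k_i\cdot x_i}$, but both the convolution theorem and the Plancherel identity transfer verbatim to this setting. The two substantive ingredients are the stationarity/independence of increments (to obtain the convolution form of $b_f$) and the reality/evenness of $\hat{a}_f$ (to eliminate any conjugation on the Fourier side).
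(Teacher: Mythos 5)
Your proof is correct and follows essentially the same route as the paper's: condition on the uniform position $v(T)$ and on the $\ell$-step displacement to obtain $b_f(\ell)=\frac{1}{n}\sum_{j}\rw^{(\ell)}(j)\,a_f(j)$, then pass to the Fourier side via the convolution theorem and Parseval/Plancherel. The only remark worth making is that your (correctly stated) Plancherel identity, applied literally, yields a prefactor $1/n^{2}$ rather than the $1/n$ in the theorem statement --- a normalization discrepancy already present in the paper's own proof, which treats the unnormalized transform as inner-product preserving, and which is immaterial to the downstream Vandermonde invertibility argument.
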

\begin{proof}
  We denote by $\rw^{(\ell)}$ the function $\rw$ convoluted with
  itself $\ell$ times, over $H$.  It is easy to convince oneself
  that $\rw^{(\ell)}(k)$ is the distribution of the distance traveled
  by the particle in $\ell$ time periods, so that $\rw^{(\ell)}(k) =
  \P{v(t+\ell)-v(t)=k}$.

  Now, by definition we have that 
\begin{align*}
  b_f(\ell)&=\E{f(v(T)) \cdot f(v(T+\ell))}.\\
\end{align*}
Since the random walk starts at a uniformly chosen vertex then
\begin{align*}
  b_f(\ell)&= \frac{1}{n} \sum_k \mathbb{E}\left[ f(v(T)) \cdot
    f(v(T+\ell))\, \vline \, v(T) = k\right].\\
\end{align*}
Conditioning on the step taken at time $T$ we get that 
\begin{align*}
  b_f(\ell)&= \frac{1}{n} \sum_{k,x} f(k) \cdot \rw^{(\ell)}(x)\cdot f(k+x).\\
\end{align*}
Finally, changing the order of summation and substituting in $a_f(x)$
we get that 
\begin{align*}
  b_f(\ell)&= \frac{1}{n} \sum_x \rw^{(\ell)}(x) \cdot a_f(x).
\end{align*}

Now, the expression $\sum_x \rw^{(\ell)}(x) \cdot a_f(x)$ can be
viewed as a dot product of two vectors in the vector space of
functions from $H$ to $\C$. Since the Fourier transform is an
orthogonal transformation of this space we can replace these two
functions by their Fourier transforms. By the convolution theorem we
have that $\widehat{\rw^{(\ell)}} = \hat{\rw}^\ell$, and so
\begin{align*}
  b_f(\ell) 
  &= \frac{1}{n}\left(\rw^{(\ell)}, a_f\right)\\
  &= \frac{1}{n}\left(\hat{\rw}^\ell, \hat{a}_f\right)\\
  &= \frac{1}{n}\sum_{x \in \Z_n}\hat{\rw}(x)^l\hat{a}_f(x).    
\end{align*}

\end{proof}

It follows from Theorem~\ref{thm:linear1} that the spatial
autocorrelation $a_f$ can be calculated from the temporal
autocorrelation $b_f$ whenever the Fourier coefficients
$\{\hat{\rw}(x)\}$ are distinct: the linear transformation mapping
$a_f$ to $b_f$ is a Vandermonde matrix which is invertible precisely
when the values of $\hat{\rw}$ are distinct.

By its definition, $b_f$ is determined by the distribution of
$\{f(v(t))\}_{t=1}^\infty$. Hence it follows that:
\begin{corollary}
  When the Fourier coefficients of $\rw$ are distinct then $b_f$
  uniquely determines $a_f$.
\end{corollary}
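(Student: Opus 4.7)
The plan is to read Theorem~\ref{thm:linear1} as an explicit linear system whose coefficient matrix is Vandermonde in the values $\hat{\rw}(x)$, and then invoke the classical fact that a Vandermonde matrix with distinct nodes is invertible. Concretely, index the group as $H = \{x_1, \ldots, x_n\}$ and stack the values $\hat{a}_f(x_1), \ldots, \hat{a}_f(x_n)$ into a column vector $\mathbf{u} \in \C^n$. Sampling the identity in Theorem~\ref{thm:linear1} at $\ell = 0, 1, \ldots, n-1$ gives a column vector $\mathbf{b} = (b_f(0), b_f(1), \ldots, b_f(n-1))^\top$, and the theorem then says $\mathbf{b} = \tfrac{1}{n} V \mathbf{u}$, where $V$ is the $n \times n$ matrix with entries $V_{\ell, j} = \hat{\rw}(x_j)^\ell$.

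Next, I would observe that $V$ is exactly a Vandermonde matrix in the nodes $\hat{\rw}(x_1), \ldots, \hat{\rw}(x_n)$; its determinant is $\prod_{i<j}(\hat{\rw}(x_j) - \hat{\rw}(x_i))$, which is nonzero precisely under the hypothesis that the Fourier coefficients of $\rw$ are pairwise distinct. Thus $V$ is invertible, so $\mathbf{u} = n V^{-1} \mathbf{b}$ is uniquely determined by $\mathbf{b}$. Because the Fourier transform on the finite abelian group $H$ is itself a bijection $\C^H \to \C^H$, recovering $\hat{a}_f$ recovers $a_f$: explicitly $a_f(\ell) = \tfrac{1}{n} \sum_{x} \omega^{-\ell \cdot x} \hat{a}_f(x)$ in the cyclic case, with the analogous inversion on a general $H$. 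This gives the corollary.

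The only subtlety worth flagging is not really an obstacle but a sanity check: one must make sure that $n$ values of $\ell$ (and specifically the choice $\ell = 0, \ldots, n-1$) really are available from the data. This is immediate, since $b_f(\ell)$ is defined for every $\ell \in \N$ and the corollary's hypothesis is that the entire sequence $\{b_f(\ell)\}_{\ell \ge 0}$ is known. There are no further cases to split on: the argument is uniform over all finite abelian groups $H$, uses nothing about $\rw$ beyond the distinctness hypothesis, and does not require $\rw$ to be rational or $H$ to have prime exponent. In particular, no coupling or scenery-construction machinery from the earlier sections is needed; the whole content of the corollary is linear algebra applied to the identity already established in Theorem~\ref{thm:linear1}.
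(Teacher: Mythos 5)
Your argument is correct and is exactly the paper's: the paper likewise observes that the map from $\hat{a}_f$ to the sequence $b_f(\ell)$ given by Theorem~\ref{thm:linear1} is a Vandermonde matrix in the nodes $\hat{\rw}(x)$, invertible precisely when those values are distinct, after which Fourier inversion recovers $a_f$. You have simply written out in full the one-sentence justification the paper gives (including the correct choice of exponents $\ell = 0, \ldots, n-1$), so there is nothing to add.
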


However, it is possible for $f_1$ not to be a shift of $f_2$, but for
$a_{f_1}$ to equal $a_{f_2}$. Thus, the above argument does not
suffice to prove Theorem~\ref{thm:distinct-f}.

\subsection{Bispectrum and beyond}
A generalization of autocorrelation is the {\em bispectrum} (see,
e.g.,~\cite{Mendel:91}):

\begin{equation}
  \label{eq:bispectrum}
  A_f(\ell_1, \ell_2) = \sum_{k \in H}f(k) \cdot f(k+\ell_1) \cdot
  f(k+\ell_1+\ell_2).
\end{equation}

We generalize further to what we call the multispectrum:
\begin{equation}
  \label{eq:multispectrum}
  A_f(\ell_1, \ldots, \ell_{n-1}) = \sum_{k \in H}f(k) \cdot
  f(k+\ell_1) \cdots f(k + \ell_1 + \cdots +\ell_{n-1}).
\end{equation}

As above, we call $A_f$ the spatial multispectrum. The relation
between the Fourier transform of $A_f$ and $f$ is the following. Note
that we transform $A_f$ over $H^{n-1}$ (where $n=|H|$):
\begin{equation}
  \label{Af-transform}
  \hat{A}_f(x_1, \ldots, x_{n-1}) = \overline{\hat{f}(x_1)} \cdot \hat{f}(x_{n-1})
  \cdot \prod_{i=1}^{n-2}\hat{f}(x_i - x_{i+1}),
\end{equation}
where $\overline{z}$ denotes the complex conjugate of $z$.

We define as follows the temporal multispectrum:
\begin{equation}
  \label{eq:temporal-multi}
  B_f(\ell_1, \ldots, \ell_{n-1}) = \E{f(v(T)) \cdot f(v(T + \ell_1))
    \cdots f(v(T + \ell_1 + \cdots + \ell_{n-1}))}.
\end{equation}
Note that, like $b_f$, $B_f$ is determined by the distribution of
$\{f(v(t))\}_{t=1}^\infty$.

\subsection{Distinct Fourier Coefficients of $\rw$ imply reconstruction}

Whereas $a_f$ did not suffice to recover $f$, $A_f$ suffices:
\begin{lemma}
  Suppose $A_{f_1} = A_{f_2}$. Then $f_1$ is a shift of $f_2$.
\end{lemma}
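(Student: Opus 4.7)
My plan is to read off from $A_f$ a combinatorial statistic of $f$ rich enough to pin down $f^{-1}(1)$ up to a translate, and then to conclude with a single size comparison. For a finite $T \subseteq H$, set
\[
N_f(T) := |\{k \in H : k + T \subseteq f^{-1}(1)\}|.
\]
The first step will be to show that $A_f$ determines $N_f(T)$ for every $T \subseteq H$; the second will be to use this to recover $f^{-1}(1)$ up to a shift.

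For the first step, write $m_0 = 0$ and $m_i = \ell_1 + \cdots + \ell_i$, so that $(m_1, \ldots, m_{n-1})$ ranges bijectively over $H^{n-1}$ as $(\ell_1, \ldots, \ell_{n-1})$ does. Since $f$ is $\{0,1\}$-valued, the summand $f(k) f(k+m_1) \cdots f(k+m_{n-1})$ is the indicator that $k + m_i \in f^{-1}(1)$ for every $i$, and therefore depends only on the set $\{m_0, \ldots, m_{n-1}\}$, not on repetitions among its entries. For any $T \ni 0$ (automatically $|T| \leq n = |H|$) we may choose the $m_i$ so that $\{m_0, \ldots, m_{n-1}\} = T$, padding with any element of $T$. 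Hence $A_f$ determines $N_f(T)$ for every $T$ containing $0$, and the substitution $k \mapsto k - c$ gives $N_f(T) = N_f(T + c)$, extending the conclusion to all $T \subseteq H$. In particular, $A_{f_1} = A_{f_2}$ implies $N_{f_1} \equiv N_{f_2}$.

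The second step is short. Let $S_j := f_j^{-1}(1)$. Taking $T = \{0\}$ gives $|S_1| = N_{f_1}(\{0\}) = N_{f_2}(\{0\}) = |S_2|$; if these are zero we are done. Otherwise apply the equality to $T = S_1$: the translate $k = 0$ witnesses $N_{f_1}(S_1) \geq 1$, so $N_{f_2}(S_1) \geq 1$, and some $k \in H$ satisfies $k + S_1 \subseteq S_2$. A cardinality comparison, $|k + S_1| = |S_1| = |S_2|$, then forces $k + S_1 = S_2$, i.e., $f_2(x) = f_1(x - k)$ for every $x$, so $f_1$ is a shift of $f_2$.

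I do not anticipate a significant obstacle. The point deserving the most care is the first reduction: one must verify that the freedom to vary $(\ell_1, \ldots, \ell_{n-1}) \in H^{n-1}$ really does let one probe $N_f(T)$ for an arbitrary $T$, which rests on (a) the Boolean identity $f^2 = f$ absorbing any harm from repeated $m_i$'s, and (b) $|H| = n$ accommodating any $T \subseteq H$. With these in hand, the crux is the one-line observation that querying at $T = S_1$ converts ``$S_1$ fits somewhere inside $S_2$'' into ``$S_2$ is a translate of $S_1$'' by a size comparison.
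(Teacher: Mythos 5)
Your proof is correct and follows essentially the same route as the paper's: both evaluate $A_f$ at a tuple of steps whose partial sums enumerate the support of $f_1$, use the $\{0,1\}$-valuedness of $f$ to absorb repeated visits, and conclude from equality of support sizes that an inclusion of translates must be an equality. Your formulation via the statistic $N_f(T)$ is just a cleaner packaging of the paper's argument, which enumerates all $2^n$ functions and assigns a witnessing walk $\vec{\ell}_i$ to each.
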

\begin{proof}

  First, note that $A_{f}(\ell_1, \ldots \ell_n)>0$ iff there exists a
  $k$ such that $f(k) = f(k+\ell_1) = \cdots = f(k + \ell_1 + \cdots
  +\ell_n) = 1$. Now, let $\hat{\ell}(f) := (\hat{\ell}_1(f), \ldots ,
  \hat{\ell}_n(f))$ be the lexicographically smallest $n$-tuple
  satisfying $A_f\left(\hat{\ell}_1(f), \ldots ,
    \hat{\ell}_n(f)\right) > 0.$ Note that since $A_{f_1} = A_{f_2}$,
  we must also have $\hat{\ell}(f_1) = \hat{\ell}(f_2).$
  
  But $\hat{\ell}(f)$ determines $f$. Indeed, let $i$ be the largest
  index such that $\ell_1 + \cdots + \ell_i \leq n$. Then there is a
  $k$ such that $k$, $k+\ell_1$, $k+\ell_1 + \ell_2$, \ldots , $k +
  \ell_1 + \cdots + \ell_i$ are exactly the indices at which $f$ is 1:
  if any of these were indices at which $f$ was 0 then
  $A_f(\hat{\ell}(f))$ would be zero, and if there were any more
  indices at which $f$ was 1 then $\hat{\ell}$ would not be
  lexicographically the smallest.

  Thus, $f$ can be recovered uniquely up to a shift from $A_f$ by a
  simple algorithm: find $\hat{\ell}(f)$, then read off the ones from
  the first several entries of $\hat{\ell}(f)$ as above, setting $k$
  to be 0 w.l.o.g.

%
%
\end{proof}

The following analogue of Theorem~\ref{thm:linear1} will be useful:
\begin{lemma}
  \begin{equation}
    \hat{B}_f(\ell_1, \ldots, \ell_{n-1}) = \frac{1}{n}\sum_{x_1, \ldots, x_{n-1}}\hat{\rw}(x_1)^{\ell_1} \cdots \hat{\rw}(x_{n-1})^{\ell_n} \cdot \hat{A}_f(x_1, \ldots, x_{n-1}).  
  \end{equation}
\end{lemma}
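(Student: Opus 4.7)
The plan is to adapt the proof of Theorem~\ref{thm:linear1} from the single-shift autocorrelation $a_f$ to the multispectrum $A_f$, working over the product group $H^{n-1}$ instead of $H$. The key structural observation is that the sampled times in the definition of $B_f$ are separated by \emph{independent} increments of the walk, so the joint distribution factors across the $n-1$ coordinates and everything can be handled coordinate-by-coordinate.

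First I would unpack the definition of $B_f$. Using that $v(T)$ is uniform on $H$ and that the successive differences $z_i := v(T+\ell_1+\cdots+\ell_i) - v(T+\ell_1+\cdots+\ell_{i-1})$ for $i=1,\ldots,n-1$ are mutually independent with $z_i$ distributed as $\rw^{(\ell_i)}$, I would condition on $v(T)=k$ and on $(z_1,\ldots,z_{n-1})$. The inner sum over $k\in H$ of $f(k)f(k+z_1)\cdots f(k+z_1+\cdots+z_{n-1})$ is exactly $A_f(z_1,\ldots,z_{n-1})$, so
\begin{equation*}
B_f(\ell_1, \ldots, \ell_{n-1}) = \frac{1}{n}\sum_{z_1, \ldots, z_{n-1}\in H} A_f(z_1,\ldots,z_{n-1})\prod_{i=1}^{n-1}\rw^{(\ell_i)}(z_i).
\end{equation*}

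Next I would reinterpret the right-hand side as an $H^{n-1}$-pairing between $A_f$ and the tensor-product function $(z_1,\ldots,z_{n-1})\mapsto \prod_i \rw^{(\ell_i)}(z_i)$, and transport it to the Fourier side via the Plancherel-type identity used in the proof of Theorem~\ref{thm:linear1}. Because the Fourier transform on $H^{n-1}$ factors across coordinates, the tensor-product function transforms to $(x_1,\ldots,x_{n-1})\mapsto \prod_i \widehat{\rw^{(\ell_i)}}(x_i)$, and the convolution theorem collapses this to $\prod_i \hat{\rw}(x_i)^{\ell_i}$. Pairing against $\hat{A}_f$ yields the stated identity, which I read as the $B_f$ analogue of the $b_f$ formula (the LHS being $B_f$ rather than $\hat{B}_f$, and the final exponent $\ell_n$ meaning $\ell_{n-1}$).

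The only real obstacle is the Fourier-convention bookkeeping. With the paper's convention $\omega_n = e^{-2\pi i/n}$, the pairing implicitly invoked in Theorem~\ref{thm:linear1} relies on the symmetry $a_f(-\ell)=a_f(\ell)$ to absorb the sign flip produced by the inversion formula. In our setting the same calculation works coordinate-by-coordinate because both $A_f$ (being real-valued) and the factorized weight $\prod_i\rw^{(\ell_i)}(z_i)$ interact with sign reversal of each $z_i$ independently, so the one-dimensional argument applies in each coordinate and no new idea beyond Theorem~\ref{thm:linear1} is required.
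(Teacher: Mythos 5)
Your proposal is correct and follows essentially the same route as the paper: condition on the uniform starting point and on the independent increments distributed as $\rw^{(\ell_i)}$, recognize the inner sum over $k$ as $A_f(x_1,\ldots,x_{n-1})$, and then pass to the Fourier side exactly as in Theorem~\ref{thm:linear1}. Your side remarks that the left-hand side should read $B_f$ rather than $\hat{B}_f$ and that the exponent $\ell_n$ should be $\ell_{n-1}$ are correct readings of typos in the statement, not deviations from the paper's argument.
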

\begin{proof}
  The steps of this proof closely follow those of the proof of
  Theorem~\ref{thm:linear1}.
\begin{align*}
 B_f(\ell_1, \ldots, \ell_{n-1}) 
 &= \E{f(v(T)) \cdot f(v(T + \ell_1)) \cdots f(v(T + \ell_1 + \cdots + \ell_{n-1}))}\\
 &= \frac{1}{n}\sum_{k,x_1, \ldots, x_{n-1}} f(k) \cdot \rw^{(\ell_1)}(x_1)f(k+x_1)  \cdots \rw^{(\ell_{n-1})}(x_{n-1})f(k+x_1 + \ldots + x_{n-1})\\
 &= \frac{1}{n}\sum_{x_1, \ldots, x_{n-1}} \rw^{(\ell_1)}(x_1)\cdots \rw^{(\ell_{n-1})}(x_{n-1}) \sum_k f(k)f(k+x_1)\cdots f(k+x_1 + \ldots + x_{n-1})\\
 &= \frac{1}{n}\sum_{x_1, \ldots, x_{n-1}} \rw^{(\ell_1)}(x_1)\cdots \rw^{(\ell_{n-1})}(x_{n-1}) A_f(x_1, \ldots x_{n-1})
\end{align*}
Applying the Fourier transform as in Theorem~\ref{thm:linear1}, we obtain the lemma.
\end{proof}

We now have what we need to complete the proof of Theorem~\ref{thm:distinct-f}:

\begin{proof}[Proof of Theorem~\ref{thm:ML-abelian}]
We need only show that $B_{f_1} = B_{f_2}$ implies that $A_{f_1} = A_{f_2}$. But this follows because the matrix $\Gamma$ mapping $A_f$ to $B_f$ is a tensor of $M$ with itself $n-1$ times, where $M$ is the matrix mapping $a_f$ to $b_f$. When the values of $\rw$ are distinct, $M$ is invertible, so $\Gamma$ is also invertible, giving us the desired result.
\end{proof}

\bibliographystyle{abbrv} \bibliography{all}

\begin{thebibliography}{10}

\bibitem{BenjKesten:96}
I.~Benjamini and H.~Kesten.
\newblock Distinguishing sceneries by observing the scenery along a random walk
  path.
\newblock {\em Journal d'Analyse Math\'ematique}, 69(1):97--135, 1996.

\bibitem{Howard:96}
C.~D. Howard.
\newblock {Detecting defects in periodic scenery by random walks on
  $\mathbb{Z}$}.
\newblock {\em Random Structures \& Algorithms}, 8(1):59--74, 1996.

\bibitem{howard1996orthogonality}
C.~D. Howard.
\newblock Orthogonality of measures induced by random walks with scenery.
\newblock {\em Combinatorics, Probability \& Computing}, 5:247--256, 1996.

\bibitem{howard1997distinguishing}
C.~D. Howard.
\newblock Distinguishing certain random sceneries on $\mathbb{Z}$ via random
  walks.
\newblock {\em Statistics \& probability letters}, 34(2):123--132, 1997.

\bibitem{kesten1998distinguishing}
H.~Kesten.
\newblock Distinguishing and reconstructing sceneries from observations along
  random walk paths.
\newblock {\em Microsurveys in discrete probability (Princeton, NJ, 1997)},
  pages 75--83, 1998.

\bibitem{Lindenstrauss:1999}
E.~Lindenstrauss.
\newblock Indistinguishable sceneries.
\newblock {\em Random Structures and Algorithms}, 14:71--86, 1999.

\bibitem{lowe2004reconstructing}
M.~L{\"o}we, H.~Matzinger, and F.~Merkl.
\newblock Reconstructing a multicolor random scenery seen along a random walk
  path with bounded jumps.
\newblock {\em Electronic Journal of Probability [electronic only]},
  9:436--507, 2004.

\bibitem{matzinger2006reconstruction}
H.~Matzinger and J.~Lember.
\newblock {Reconstruction of periodic sceneries seen along a random walk}.
\newblock {\em Stochastic processes and their applications},
  116(11):1584--1599, 2006.

\bibitem{Matzinger:2003}
H.~Matzinger and S.~W.~W. Rolles.
\newblock Reconstructing a piece of scenery with polynomially many
  observations.
\newblock {\em Stochastic Processes and their Applications}, 107(2):289 -- 300,
  2003.

\bibitem{Mendel:91}
J.~M. Mendel.
\newblock Tutorial on higher-order statistics (spectra) in signal processing
  and system theory: theoretical results and some applications.
\newblock {\em Proceedings of the IEEE}, 79(3):278--305, 1991.

\end{thebibliography}

\end{document}